\def\be{\begin{equation}}
\def\ee{\end{equation}}
\def\beq{\begin{eqnarray*}}
\def\eeq{\end{eqnarray*}}
\def\Z{\mathbb{Z}}
\newcommand{\pic}[3]{\parbox[c]{#1cm}{\includegraphics[scale=#2]{#3}}}
\newcommand{\picw}[2]{\parbox[c]{#1cm}{\includegraphics[width = #1cm]{#2}}}
\newtheorem{theo}{Theorem}[section]
\newtheorem{cor}[theo]{Corollary}
\newtheorem{lem}[theo]{Lemma}
\newtheorem{prop}[theo]{Proposition}
\theoremstyle{definition}
\newtheorem{defn}[theo]{Definition}
\newtheorem{ex}[theo]{Example}
\newtheorem{quest}[theo]{Question}
\newtheorem{rem}[theo]{Remark}
\author{Alessio Carrega}
\address{Dipartimento di Matematica, Largo Pontecorvo 5, 56127 Pisa, Italy}
\email{carrega at mail dot dm dot unipi dot it}
\title[The Tait conjecture in $S^1\times S^2$]{The Tait conjecture in $S^1\times S^2$}
\begin{document}

\begin{abstract}
The Tait conjecture states that reduced alternating diagrams of links in $S^3$ have the minimal number of crossings. It has been proved in 1987 by M. Thistlethwaite, L.H. Kauffman and K. Murasugi studying the Jones polynomial. In this paper we prove an analogous result for alternating links in $S^1\times S^2$ giving a complete answer to this problem. In $S^1\times S^2$ we find a dichotomy: the appropriate version of the statement is true for $\Z_2$-homologically trivial links, and our proof also uses the Jones polynomial. On the other hand, the statement is false for $\Z_2$-homologically non trivial links, for which the Jones
polynomial vanishes.
\end{abstract}

\maketitle

\setcounter{tocdepth}{1}
\tableofcontents

\section{Introduction}
One of the first invariants of links is the \emph{crossing number}: the minimal number of crossings that a diagram must have to present that link. In general it is hard to compute. During his attempt to tabulate all knots in $S^3$ in the $19^{\rm th}$ century \cite{Tait}, P.G. Tait stated three conjectures about crossing number, \emph{alternating links} and \emph{writhe number} (Definition~\ref{defn:alt_cr_num}). An \emph{alternating link} is a link that admits an \emph{alternating diagram}: a diagram $D$ such that the parametrization of its components $S^1\rightarrow D\subset D^2$ meets overpasses and underpasses alternately. All the conjectures have been proved before 1991. A diagram $D$ of a link in $S^3$ is said to be \emph{reduced} if it has no crossings as the ones in Fig.~\ref{figure:reducedDS3} (the blue parts cover the rest of the diagram). 

\begin{figure}[htbp]
\begin{center}
\includegraphics[scale=0.6]{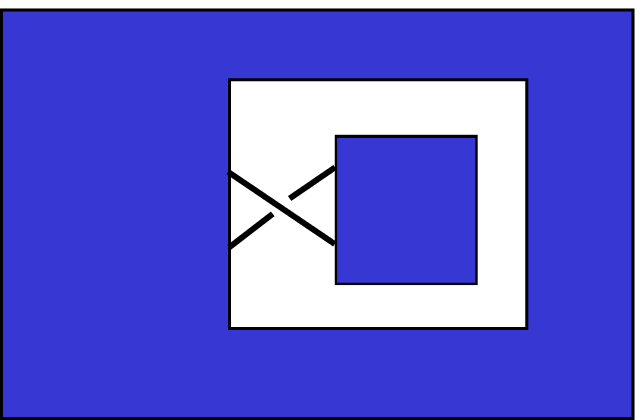}
\hspace{0.5cm}
\includegraphics[scale=0.6]{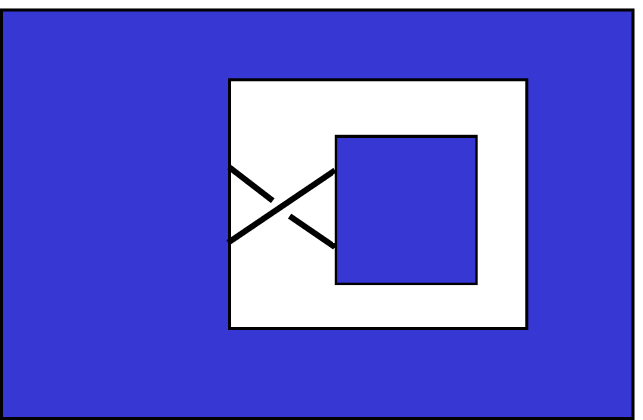}
\end{center}
\caption{Non reduced diagrams of links in $S^3$.}
\label{figure:reducedDS3}
\end{figure}

We focus just on one Tait conjecture, the following one: \emph{``Every reduced alternating diagram of links in $S^3$ has the minimal number of crossings''.} 

This conjecture was proved in 1987 by M. Thistlethwaite \cite{Thistlethwaite}, L.H. Kauffman \cite{Kauffman_Tait} and K. Murasugi \cite{Murasugi1, Murasugi2}. We are interested in the more general case of links in $S^1\times S^2$. A link diagram in the annulus $A=S^1\times [-1,1]$ defines a link in its thickening that is the solid torus $S^1\times D^2\cong A\times [-1,1]$, and hence on its double $S^1\times S^2$. The decomposition of $S^1\times S^2$ in two tori is unique up to isotopies (Theorem~\ref{theorem:Heegaard_split_S1xS2}), but the proper embedding of the annulus in a solid torus is not. Once fixed one such embedding of the annulus, every link in $S^1\times S^2$ can be described by a diagram on it, thus we can still talk about crossing number and alternating diagrams (Definition~\ref{defn:alt_cr_num}). We define the \emph{crossing number} of a link $L\subset S^1\times S^2$ as the minimal number of crossings that a diagram must have to represent $L$ in some embedded annulus. We call \emph{H-decomposition} the decomposition of $S^1\times S^2$ in two solid tori. We extend the notion of ``reduced'' as follows:
\begin{defn}\label{defn:reduced}
A link diagram $D\subset S^1\times [-1,1]$ is said to be \emph{simple} if it has no crossings as the ones shown in Fig.~\ref{figure:reducedD} and Fig.~\ref{figure:reducedD2} (in those figures the blue parts cover what remains of the diagram). 
\end{defn}

\begin{figure}[htbp]
\begin{center}
\includegraphics[scale=0.55]{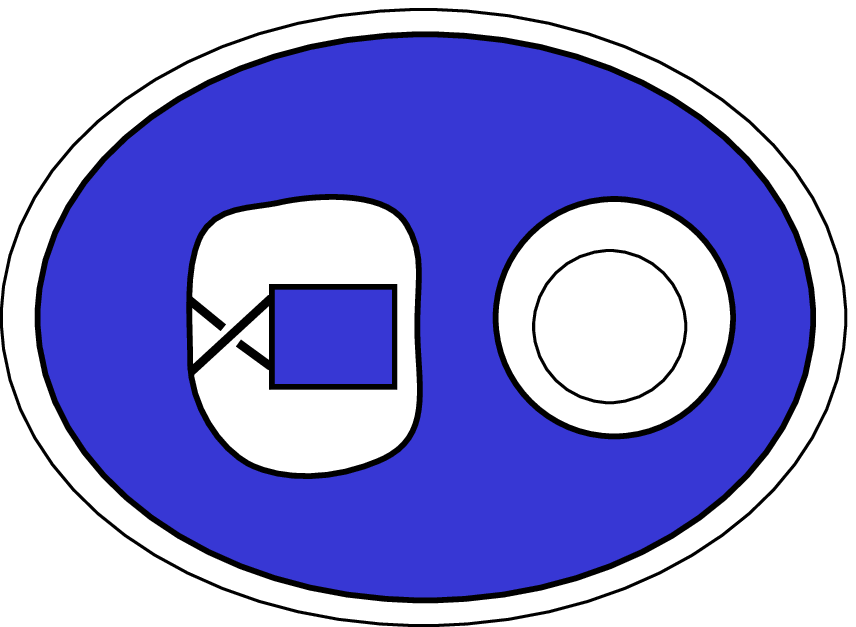} 
\hspace{0.5cm}
\includegraphics[scale=0.55]{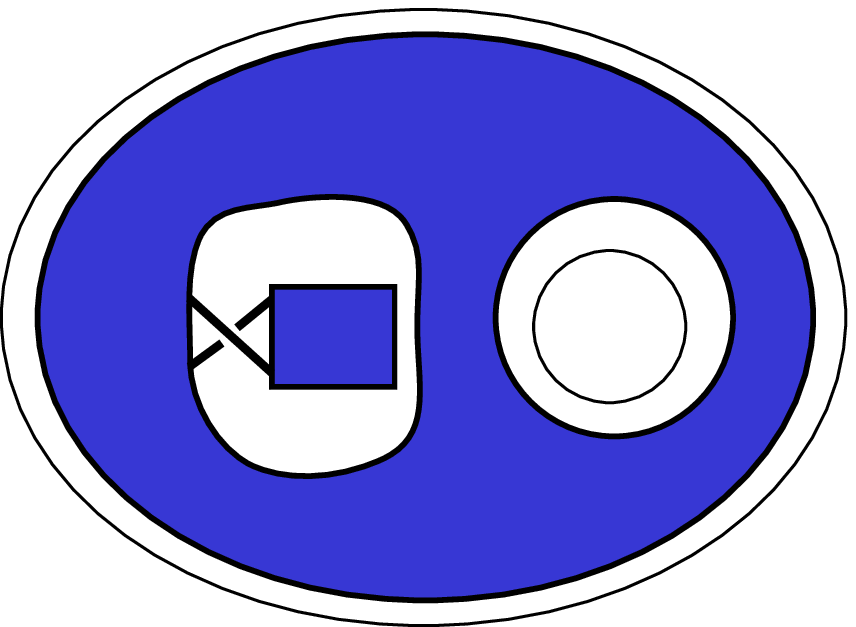}
\end{center}
\caption{Some non simple diagrams.}
\label{figure:reducedD}
\end{figure}

\begin{figure}[htbp]
\begin{center}
\includegraphics[scale=0.55]{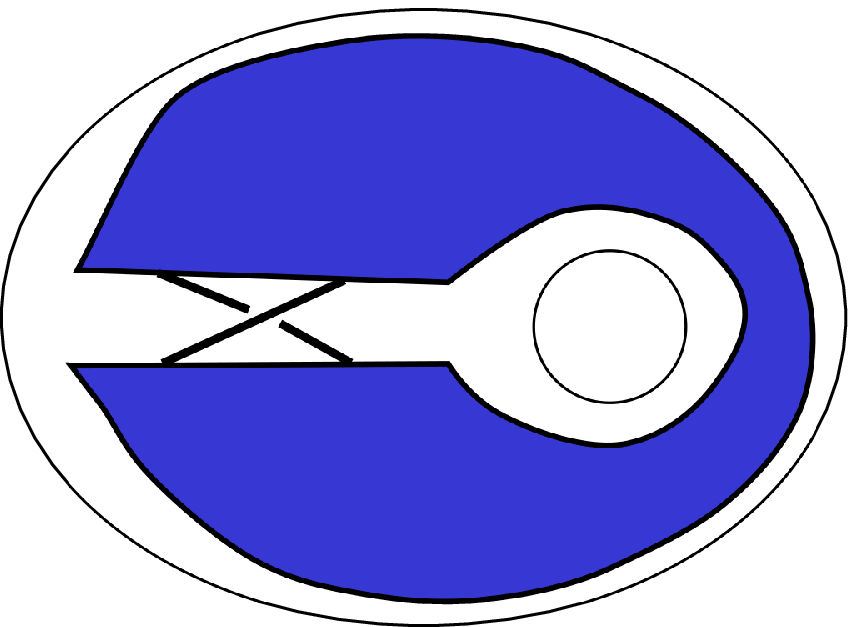} 
\hspace{0.5cm}
\includegraphics[scale=0.55]{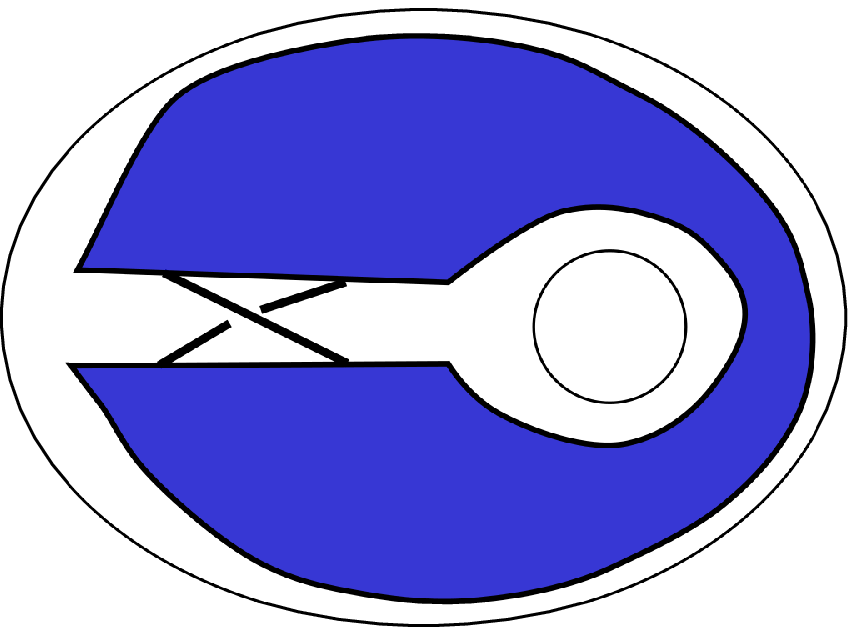}
\end{center}
\caption{More non simple diagrams.}
\label{figure:reducedD2}
\end{figure}

Our definition of ``simple'' was designed to obtain the following theorem, which is the main result of the paper. Before stating it, we specify that we say that a link $L$ is $\Z_2$-\emph{homologically trivial} if its homology class $[L]\in H_1(S^1\times S^2; \Z_2)$ is trivial $[L]=0$ ($\Z_2 =\Z / 2 \Z$). Furthermore we remind that being $\Z_2$-homologically trivial is equivalent to be the boundary of an embedded surface. 
\begin{theo}\label{theorem:Tait_conj}
Fix a proper embedding of the annulus in a solid torus of the H-decomposition of $S^1\times S^2$. Let $D\subset S^1\times [-1,1]$ be a $n$-crossing link diagram of a $\Z_2$-homologically trivial link $L \subset S^1\times S^2$. If $D$ is alternating and simple, and $L$ is non H-split (\emph{e.g.} a knot) and not contained in a 3-ball, then the number of crossings of $D$ is equal to the crossing number of $L$.
\end{theo}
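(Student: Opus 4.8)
The plan is to adapt the Kauffman–Murasugi–Thistlethwaite strategy, which bounds the crossing number from below using the span (breadth) of the Kauffman bracket or Jones polynomial. In $S^3$ the key inequality is that for any $n$-crossing connected diagram $D$, the span of the bracket polynomial satisfies $\operatorname{span}\langle D\rangle \le 4n$, with equality precisely when $D$ is reduced and alternating. Since the span is a link invariant (up to the framing/writhe correction that multiplies the bracket by a power of $-A^3$, which does not affect the span), this immediately yields $4c(L)\ge \operatorname{span}\langle D\rangle = 4n$, hence $c(L)\ge n$, forcing $n$ to be minimal. My first step would therefore be to set up the correct analogue of the Kauffman bracket for diagrams in the annulus $S^1\times[-1,1]$, taking values in the skein module of $S^1\times S^2$, and to isolate from it a $\Z[A^{\pm 1}]$-valued span that is invariant under the Reidemeister moves available in $S^1\times S^2$.

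\smallskip

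The core of the argument is a state-sum span estimate. I would expand the bracket over the $2^n$ Kauffman states of $D$, where each crossing is smoothed in one of two ways, and track the extreme powers of $A$. The maximal power comes from the all-$A$ state and the minimal from the all-$B$ state; each single crossing change between adjacent states shifts the contributing power by at most $4$, and the loop count changes by $\pm1$. The standard dual-state counting lemma shows that for the all-$A$ and all-$B$ states the numbers of loops $|s_A D|$ and $|s_B D|$ satisfy $|s_A D| + |s_B D| \le n + 2$ for a connected diagram in $S^2$, which is what produces the bound $\operatorname{span} \le 4n$. The essential new work is to prove the correct analogue of this loop-counting inequality \emph{inside the annulus}, where the relevant ``state circles'' can now be either trivial (bounding a disk in $S^1\times S^2$) or essential (homologically nontrivial, wrapping the $S^1$ factor). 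Here the hypotheses enter decisively: the $\Z_2$-homological triviality of $L$ guarantees that the Jones polynomial does not vanish (the excerpt notes it vanishes for $\Z_2$-nontrivial links), so the span is a meaningful nonzero invariant; and the ``simple'' condition of Definition~\ref{defn:reduced}, together with non-H-splitness and not being contained in a $3$-ball, is exactly what rules out the degeneracies (the crossings of Figures~\ref{figure:reducedD} and~\ref{figure:reducedD2}) that would let two state circles share a crossing without the usual loss, and thus exactly what is needed to force equality in the annular loop-counting inequality.

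\smallskip

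The hardest part, I expect, is the annular dual-state lemma and the precise contribution of essential circles to the span. In $S^3$ every state circle bounds a disk and contributes the same scalar $(-A^2-A^{-2})$, so the extreme-degree bookkeeping is uniform; in the skein module of $S^1\times S^2$ an essential circle represents a distinct basis element and its powers of $A$ behave differently, so I must show that essential circles either cancel or still contribute the expected extremal degrees rather than collapsing the span. I would argue that the two forbidden configurations in Figures~\ref{figure:reducedD} and~\ref{figure:reducedD2} are precisely the local pictures in which an essential state circle would meet itself across a crossing and cause a defect; excluding them restores the clean inequality $|s_A D| + |s_B D| \le n + 2$ in the annulus. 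The role of ``non H-split and not contained in a $3$-ball'' is to ensure the diagram is genuinely connected and genuinely uses the topology of $S^1\times S^2$, so that the extremal coefficients of the bracket do not cancel. Once the loop-counting equality is established under the simple-and-alternating hypotheses, the span computes to exactly $4n$, and invariance of the span gives $c(L)\ge n$; since $D$ realizes $n$ crossings, $c(L)=n$ and the theorem follows.
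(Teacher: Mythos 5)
Your proposal follows the same overall route as the paper (a Kauffman--Murasugi--Thistlethwaite span argument for the Kauffman bracket in the skein module of $S^1\times S^2$), but the quantitative core is mis-stated and the hardest step is left unresolved, and both matter. In the annulus, homotopically essential state circles do \emph{not} contribute factors of $(-A^2-A^{-2})$: a state $s$ with $p(s)$ essential circles contributes the \emph{constant} $\alpha(p(s))$, which equals $0$ when $p(s)$ is odd and the positive integer $\frac{1}{m+1}\binom{2m}{m}$ when $p(s)=2m$ (Lemma~\ref{lem:parallel_cores}, Proposition~\ref{prop:state_sum}); establishing this is a genuine skein computation (fusion plus the sphere relations), which you flag as ``the hardest part'' but do not carry out. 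Consequently only the homotopically trivial circles enter the degree bookkeeping, and the loop-counting inequality you need for a connected diagram \emph{not} contained in a 2-disk is $s_+D+s_-D\leq n$ counting trivial circles only (Lemma~\ref{lem:ineq2}), not your $\leq n+2$; the resulting dichotomy of breadth bounds, $4n+4$ for diagrams in a disk versus $4n$ otherwise (Theorem~\ref{theorem:Tait}), is exactly what closes the argument. With your uniform bound $n+2$ the final comparison would only yield $4n\leq 4n'+4$, i.e.\ $n\leq n'+1$, which does not prove minimality. Note also that $\Z_2$-homological triviality enters precisely here: it forces $p(s)$ to be even for \emph{every} state, so every coefficient $\alpha(p(s))$ is a nonzero positive integer and the extremal terms cannot vanish --- this is a stronger and more local statement than ``the Jones polynomial does not vanish.''

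You also misattribute the roles of the hypotheses. Non-cancellation of the extremal coefficients of $\langle D\rangle$ has nothing to do with ``non H-split'' or ``not contained in a 3-ball''; it comes from adequacy, which the paper deduces from simple $+$ alternating $+$ connected $+$ $\Z_2$-trivial (Proposition~\ref{prop:reduced_D}), combined with the equality statement of Lemma~\ref{lem:ineq1} and the alternating loop-count equality (Lemma~\ref{lem:alter_eq}). The two topological hypotheses are instead about the \emph{competitor} diagrams: they guarantee that every diagram $D'$ of $L$, in every embedding of the annulus, is connected and not contained in a 2-disk, so that the sharper upper bound $B(\langle D'\rangle)\leq 4n(D')$ applies to all of them and one can conclude $4n(D)=B(\langle L\rangle)\leq 4n(D')$. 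Without this, some diagram of $L$ could be disconnected (where the loop-count lemma fails) or lie in a disk (where the bound degrades to $4n(D')+4$), and the minimality conclusion would not follow.
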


We remark that the theorem says that $D$ has the minimal number of crossings among all diagrams that represent the link via some embedded annulus. Being \emph{non H-split} means that every diagram $D\subset S^1\times [-1,1]$ of $L$ is connected for every choice of the embedding of the annulus, namely it is so as a 4-valent graph (see Definition~\ref{defn:split}). 

The condition of being $\Z_2$-homologically trivial is essential. In fact we show that without that hypothesis the statement is false: once an embedding of the annulus is fixed, the diagrams in Fig.~\ref{figure:no_Tait} represent the same knot $K\subset S^1\times S^2$. The knot $K$ is $\Z_2$-homologically non trivial, and the diagrams are both alternating and simple, but they have different number of crossings. 

\begin{figure}
\begin{center}
\includegraphics[scale=0.55]{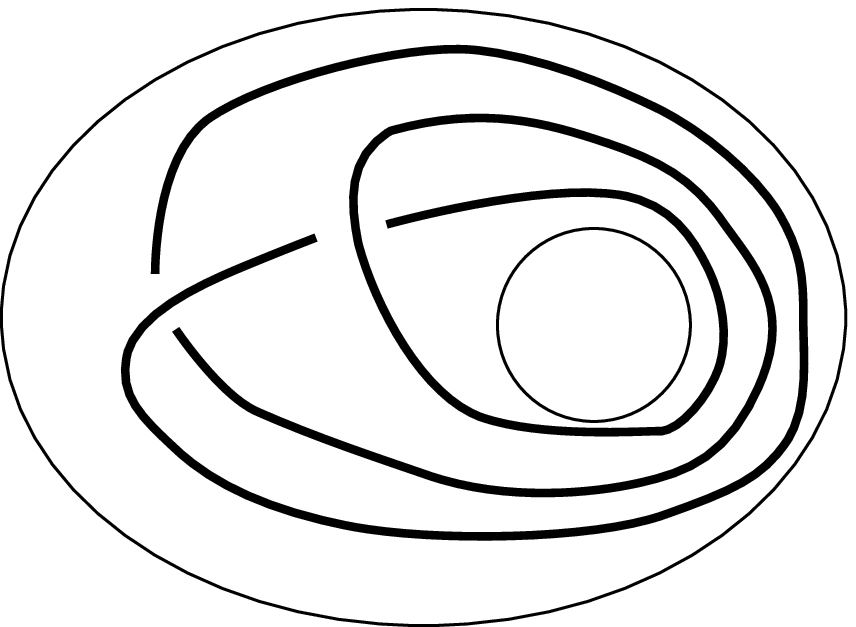} 
\hspace{0.5cm}
\includegraphics[scale=0.55]{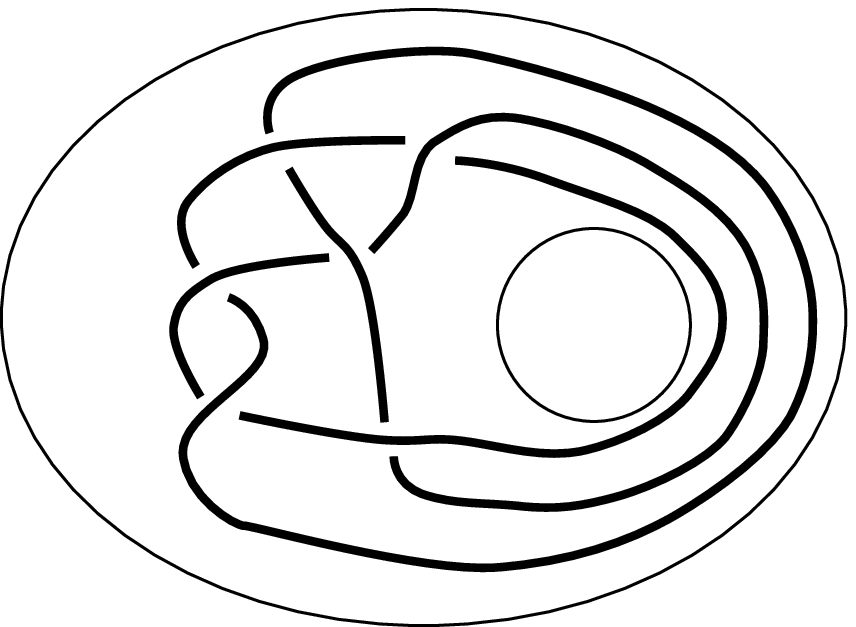}
\end{center}
\caption{Two alternating and simple diagrams of the same $\Z_2$-homologically non trivial knot which have different numbers of crossings.}
\label{figure:no_Tait}
\end{figure}

The proof of the classical case of links in $S^3$ is based on another famous invariant of links: the \emph{Jones polynomial}. This is an invariant that associates to each oriented link $L$ a Laurent polynomial with integer coefficients $J(L)\in \Z[A,A^{-1}]$ (we use the variable $A= t^{-\frac 1 4} = \sqrt{\pm q^{\pm 1}}$ and we normalize it so that $J\left( \pic{0.8}{0.3}{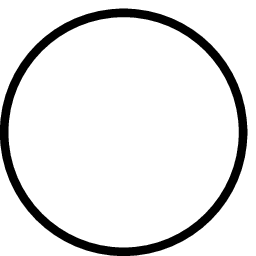} \right) = -A^2-A^{-2}$). This is related to an invariant of framed links: the \emph{Kauffman bracket} $\langle L \rangle$. They differ just by the multiplication of a power of $-A^3$. The Jones polynomial is the most simple \emph{quantum invariant}. One of the main targets of modern knot theory is to ``understand'' these invariants. There are few topological applications of the quantum invariants, and in particular of the Jones polynomial, and most of them are just conjectures. The theorem of Thistelwaithe-Kauffman-Murasugi is one of the most notable applications of the Jones polynomial. The \emph{breadth} $B(f)\in \Z$ is an integer associated to each Laurent polynomial $f$ (see Definition~\ref{def:breadth}). The breadth of the Jones polynomial, or of the Kauffman bracket, is invariant under the change of orientation and framing, and we have $B(J(L))=B(\langle L\rangle)$. The theorem of Thistelwaithe-Kauffman-Murasugi shows that we can get information about the crossing number from the \emph{breadth} of the Jones polynomial (or of the Kauffman bracket). In particular we can easily compute it if the link is alternating. 

The Jones polynomial, or more precisely the Kauffman bracket,  is also defined in $S^1\times S^2$ (see for instance \cite{Carrega-Martelli}, \cite{Costantino2} or Section~\ref{section:Preliminaries}). Proceeding in a similar way than Thistelwaithe-Kauffman-Murasugi, we get the following result:
\begin{theo}\label{theorem:Tait_conj_Jones}
Let $D\subset S^1\times [-1,1]$ be a connected, $n$-crossing link diagram of a $\Z_2$-homologically trivial link $L \subset S^1\times S^2$. If $D$ is alternating and without crossings as the ones in Fig.~\ref{figure:reducedD}, then
$$
B(\langle L\rangle) = \begin{cases}
4n+4 & \text{if $D$ is contained in a 2-disk} \\
4n-4k & \text{otherwise} 
\end{cases} ,
$$
where $k$ is the number of crossings as the ones in Fig.~\ref{figure:reducedD2}.
\end{theo}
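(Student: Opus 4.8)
The plan is to mimic the Thistlethwaite--Kauffman--Murasugi strategy by expanding $\langle L\rangle$ as a state sum and locating its extreme powers of $A$, but carried out inside the skein module of $S^1\times S^2$ rather than of $S^2$. First I would fix the state-sum expansion coming from Section~\ref{section:Preliminaries}: resolving each of the $n$ crossings by an $A$- or $B$-smoothing produces a state $s$, and the resulting system of disjoint simple closed curves in the annulus $S^1\times[-1,1]$ splits into $t(s)$ \emph{trivial} loops (each bounding a disk in the annulus) and $e(s)$ \emph{essential} loops (each isotopic to the core). Writing $a(s),b(s)$ for the numbers of $A$- and $B$-smoothings, this gives $\langle L\rangle=\sum_s A^{a(s)-b(s)}\,\delta^{t(s)}\,\widehat{z^{\,e(s)}}$, where $\delta=-A^2-A^{-2}$ and $\widehat{z^{\,e}}\in\Z[A^{\pm1}]$ is the value in $S^1\times S^2$ of $e$ parallel copies of the core, as computed in Section~\ref{section:Preliminaries}. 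Two facts from there are the engine of the argument: $\widehat{z^{\,e}}=0$ when $e$ is odd, and for $e$ even $\widehat{z^{\,e}}$ is a genuine Laurent polynomial whose top and bottom degrees I have to control. Since every state $s$ has $e(s)\equiv[L]\pmod 2$, the hypothesis that $L$ is $\Z_2$-homologically trivial is exactly what makes all surviving states contribute (and is what forces the whole bracket to vanish otherwise).

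Next I would pin down the two extreme states. As in the classical case, the all-$A$ state $s_A$ supplies the candidate for the top degree and the all-$B$ state $s_B$ the candidate for the bottom degree, and I would record the degree of each state's contribution as $a(s)-b(s)+2t(s)+\deg_{\max}\widehat{z^{\,e(s)}}$ (and the analogous lower bound). The heart of the matter is a two-part claim. The first part is an \emph{adequacy} statement: because $D$ is alternating and has no crossing of the type in Fig.~\ref{figure:reducedD}, no state circle of $s_A$ (resp.\ $s_B$) touches itself at a smoothed crossing, so in passing from $s_A$ (resp.\ $s_B$) to an adjacent state the number of loops strictly drops; this guarantees that the extreme power of $A$ produced by $s_A$ and by $s_B$ is realized by that state alone and is not cancelled. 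The second part is the combinatorial loop count, the annular analogue of the inequality $|s_A|+|s_B|\le n+2$: using the checkerboard colouring of $D$ in the annulus and the Euler characteristic of the associated state surface, I would compute $t(s_A)+e(s_A)$ and $t(s_B)+e(s_B)$ exactly for an alternating simple diagram, the outcome depending on whether $D$ lies in a $2$-disk or genuinely wraps around the annulus.

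Assembling these, the top degree equals $n+2t(s_A)+\deg_{\max}\widehat{z^{\,e(s_A)}}$ and the bottom degree equals $-n-2t(s_B)+\deg_{\min}\widehat{z^{\,e(s_B)}}$, and subtracting gives $B(\langle L\rangle)$. When $D$ is contained in a $2$-disk every state circle is trivial, the essential contributions disappear, and the count reproduces exactly the classical $|s_A|+|s_B|=n+2$, yielding $4n+4$. When $D$ wraps around the annulus the essential loops appear, and here the reduction $\widehat{z^{\,e}}$ is what costs breadth: each crossing of the type in Fig.~\ref{figure:reducedD2} is precisely the place where the extreme state carries an essential band whose reduction in $S^1\times S^2$ shortens the Laurent polynomial, and I would show that each such crossing lowers the span by exactly $4$, producing $4n-4k$.

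I expect the main obstacle to be the interaction of the two non-classical features in the wrapping case: the state-circle count must be reconciled with the fact that the $S^1\times S^2$ skein relation can replace a family of essential circles by a shorter Laurent polynomial, so a naive count of loops overestimates the breadth. The delicate point is to prove that this shortening happens in a controlled way---localized exactly at the $k$ crossings of Fig.~\ref{figure:reducedD2} and contributing $-4$ each---while simultaneously verifying that it never cancels the extreme monomials coming from $s_A$ and $s_B$. Establishing that the adequacy argument survives the passage from the solid-torus skein module to $S^1\times S^2$, so that the extreme terms are still uniquely attained, is the step I would expect to require the most care.
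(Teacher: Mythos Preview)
Your overall architecture is right for the case $k=0$: the paper proceeds exactly as you describe, via the state sum, adequacy (Proposition~\ref{prop:reduced_D}), and an Euler-characteristic loop count (Lemma~\ref{lem:alter_eq}). The gap is in the case $k>0$, and it lies precisely in the adequacy step.

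Your claim that ``no state circle of $s_A$ touches itself at a smoothed crossing, so in passing from $s_A$ to an adjacent state the number of loops strictly drops'' is correct for the \emph{total} loop count, but the quantity that governs the top degree is $t(s)$, the number of \emph{trivial} loops, since $\widehat{z^{\,e}}=\alpha(e)$ is an integer (degree~$0$). At a crossing of the type in Fig.~\ref{figure:reducedD2}, the two $s_A$-arcs lie on two distinct \emph{essential} circles; switching to a $B$-smoothing merges them into one \emph{trivial} circle, so $t(s)$ goes \emph{up} by~$1$ while $e(s)$ drops by~$2$. The adjacent state then has top degree $(n-2)+2(t(s_A)+1)=n+2t(s_A)$, exactly equal to that of $s_A$, and the leading coefficients $\alpha(e(s_A))(-1)^{t(s_A)}$ and $\alpha(e(s_A)-2)(-1)^{t(s_A)+1}$ have opposite signs. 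So the extreme monomials \emph{are} cancelled; the diagram is genuinely not adequate when $k>0$ (this is case~(3) in the proof of Proposition~\ref{prop:reduced_D}). Your proposed mechanism for the $-4k$ correction (a shortening of $\widehat{z^{\,e}}$) is not what is happening: the correction comes from these cancellations, and tracking them directly through the state sum for general $k$ is not straightforward.

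The paper sidesteps this entirely. For $k>0$ it applies the sphere-intersection identity of Fig.~\ref{figure:sphere} (with colors $1,1$) at a crossing of Fig.~\ref{figure:reducedD2} type: this rewrites $\langle D\rangle$ as $(-A^2-A^{-2})^{-1}\langle D'\rangle$ for an alternating $n$-crossing diagram $D'$ lying in a $2$-disk. In $D'$ the $k$ special crossings become ordinary nugatory crossings (Fig.~\ref{figure:reducedDS3}), so one reduces to an alternating reduced $(n-k)$-crossing disk diagram $D''$ and invokes the already-established disk case: $B(\langle D\rangle)=B(\langle D''\rangle)-4=4(n-k)+4-4=4n-4k$. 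I recommend you replace your adequacy-based treatment of $k>0$ with this reduction.
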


We say that a link in $S^1\times S^2$ is alternating if it is represented by an alternating diagram in some embedded annulus. The previous theorem gives criteria to detect if a link in $S^1\times S^2$ is not alternating (Corollary~\ref{cor:conj_Tait_Jones}), and we show some examples (Example~\ref{ex:no_alt}).

The Kauffman bracket is also very sensitive of the $\Z_2$-homology class of the link. In fact the Jones polynomial of $\Z_2$-homologically non trivial links is always $0$ (Proposition~\ref{prop:0Kauff}).

In $S^3$ all link diagrams with the minimal number of crossings are reduced. We are not able to prove that all links in $S^1\times S^2$ that are different from the knot with crossing number $1$ (see Fig.~\ref{figure:rem_adeq}-(left)), have a simple diagram with the minimal number of crossings (probably it is not true). That is why we use the word ``simple'' instead of ``reduced''. However we can say that almost all of them have such diagram, and every diagram can be replaced by a \emph{quasi-simple} diagram with less crossings (see Subsection~\ref{subsec:non_red}):
\begin{defn}\label{defn:quasi-reduced}
A \emph{quasi-simple} diagram is a link diagram $D\subset S^1\times [-1,1]$ without crossings as the ones in Fig.~\ref{figure:reducedD} and with at most one crossing as the ones in Fig.~\ref{figure:reducedD2}.
\end{defn}
Clearly if a link does not intersect twice a non separating 2-sphere, it has a simple diagram. The statement of Theorem~\ref{theorem:Tait_conj} does not hold for all quasi-simple diagrams. In fact we show that the alternating diagrams in Fig.~\ref{figure:no_Tait_qr} represent the same $\Z_2$-homologically trivial knot once fixed the proper embedding of the annulus.

\begin{figure}[htbp]
\begin{center}
\includegraphics[scale=0.55]{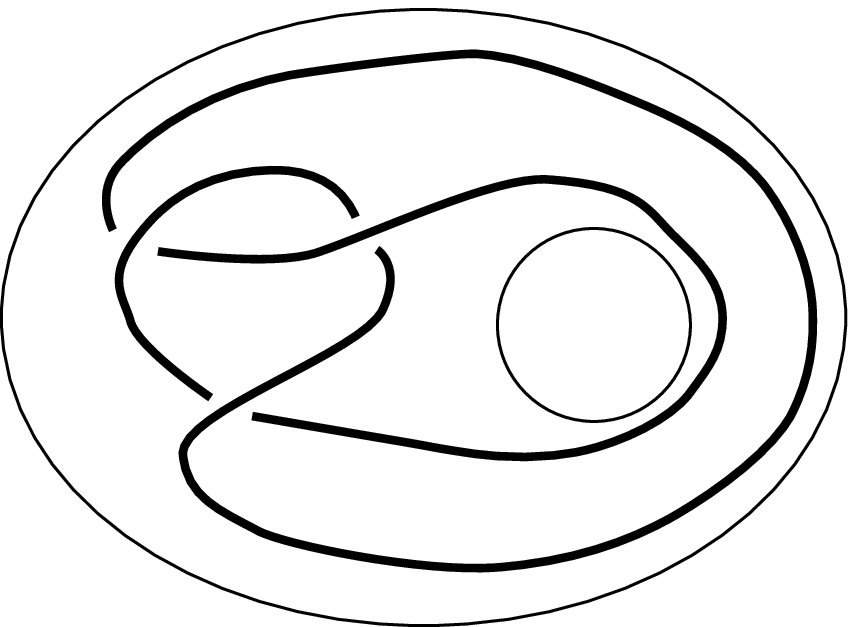} 
\hspace{0.5cm}
\includegraphics[scale=0.55]{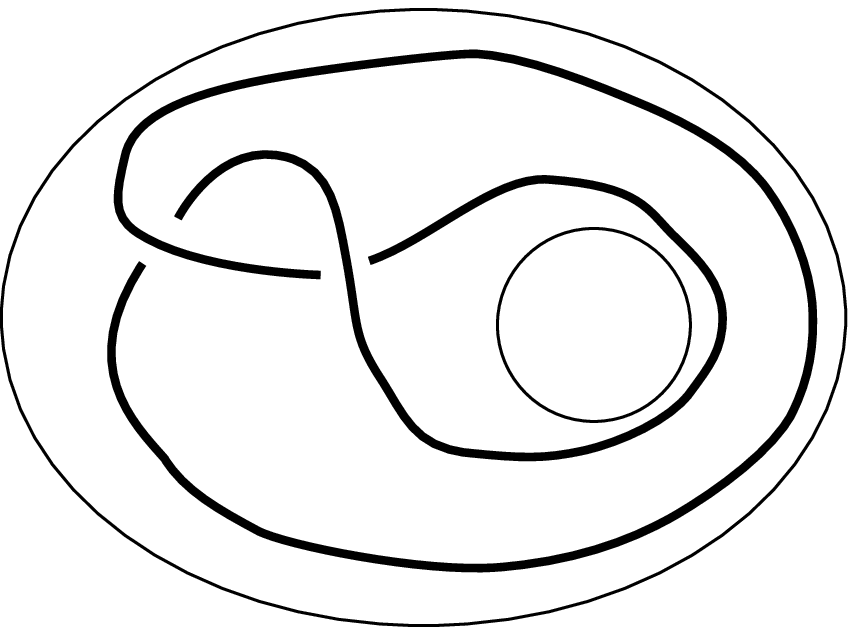}
\end{center}
\caption{Two quasi-simple alternating diagrams with different numbers of crossings and representing the same $\Z_2$-homologically trivial knot.}
\label{figure:no_Tait_qr}
\end{figure}

\subsection*{Structure of the paper}

In the second section we give the needed preliminaries about the Jones polynomial, the Kauffman bracket, \emph{skein theory} and diagrams.

In the third section we introduce some further notions, give some results, and we show that the natural extension of the conjecture is false if we remove the hypothesis of being $\Z_2$-homologically trivial or if we substitute ``simple'' with ``quasi-simple''.

In the fourth one we prove Theorem~\ref{theorem:Tait_conj} and Theorem~\ref{theorem:Tait_conj_Jones}. The proof follows the classical one for links in $S^3$.

In the last section the make some conjectures and recall all the questions.

\subsection*{Acknowledgments}
The author is warmly grateful to Bruno Martelli for his constant support and encouragement.

\section{Preliminaries}\label{section:Preliminaries}
In this section we recall some basic notions about the \emph{Jones polynomial}, the \emph{Kauffman bracket}, \emph{skein theory} and \emph{diagrams}.

\subsection{Kauffman bracket}
The \emph{Kauffman bracket} $\langle D \rangle \in \Z[A,A^{-1}]$ is a Laurent polynomial associated to a diagram $D$ of a link in $S^3$ or to a framed link. The Kauffman bracket is useful to define and understand the \emph{Jones polynomial}, they differ just by the multiplication of a power of $-A^3$. Thanks to result of Hoste-Przytycki \cite{HP2, Pr2} and (with different techniques) to Costantino \cite{Costantino2}, it is also defined in the connected sum $\#_g(S^1\times S^2)$ of copies of $S^1 \times S^2$. We briefly recall its definition.

Let $M$ be an oriented 3-manifold. Consider the field $\mathbb{Q}(A)$ of all rational functions with variable $A$ and coefficients in $\mathbb{Q}$. Let $V$ be the abstract $\mathbb{Q}(A)$-vector space generated by all framed links in $M$, considered up to isotopy. 
\begin{defn}
The \emph{skein vector space} $K(M)$ is the quotient of $V$ by all the possible \emph{skein relations}: 
$$
\begin{array}{rcl}
 \pic{1.2}{0.3}{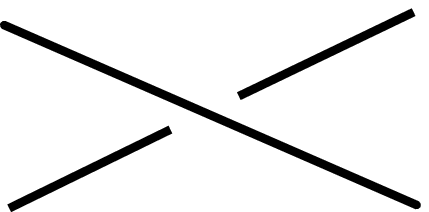}  & = & A \pic{1.2}{0.3}{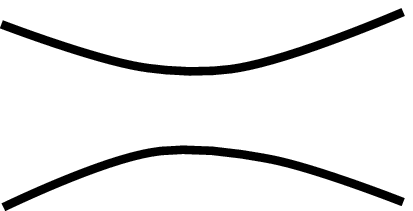}  + A^{-1}  \pic{1.2}{0.3}{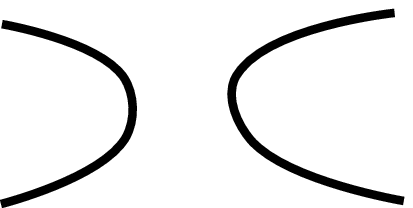}  \\
 L \sqcup \pic{0.8}{0.3}{banp.eps}  & = & (-A^2 - A^{-2})  D  \\
 \pic{0.8}{0.3}{banp.eps}  & = & (-A^2-A^{-2}) \varnothing
\end{array}
$$
These are local relations where the framed links in an equation differ just in the pictured 3-ball that is equipped with a positive trivialization. An element of $K(M)$ is called a \emph{skein} or a \emph{skein element}.
\end{defn}

Since the work of Kauffman we have that the skein vector space of $S^3$ is isomorphic to the base field and is generated by the empty set $\varnothing$. Furthermore the skein element of each link $L$ is equal to the empty set multiplied by the Kauffman bracket of $L$, that is $L= \langle L\rangle \cdot \varnothing$. More in general we have:
\begin{theo}[Hoste, Przytycki]
The skein vector space $K(\#_g(S^1\times S^2))$ of $\#_g(S^1\times S^2)$ is isomorphic to $\mathbb{Q}(A)$ and generated by the empty skein $\varnothing$.
\begin{proof}
This is due to Hoste and Przytycki \cite{HP2, Pr1, Pr2}, see also \cite[Proposition 1]{BFK}.
\end{proof}
\end{theo}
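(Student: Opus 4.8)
The plan is to reduce the computation to the Kauffman bracket skein module of a handlebody, and then to exploit the extra sliding relations that the non-separating $2$-spheres of $\#_g(S^1\times S^2)$ provide, observing that the resulting recursions become invertible precisely because we work over the field $\mathbb{Q}(A)$.

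First I would fix a genus-$g$ Heegaard splitting $\#_g(S^1\times S^2)=H\cup_\Sigma H'$, where $H,H'$ are handlebodies and the gluing identifies a meridian of $H$ with a meridian of $H'$ in each handle (so that each summand is the double of a solid torus and carries a non-separating $2$-sphere $S_i$ meeting the $i$-th core once). Any framed link can be isotoped off a spine of $H'$, hence into the interior of $H$, so the natural map $K(H)\to K(\#_g(S^1\times S^2))$ is surjective. Since $K(H)\cong\mathbb{Q}(A)[z_1,\dots,z_g]$ is the polynomial algebra on the cores $z_i$ (with basis the monomials in disjoint parallel copies of the cores), it suffices to show that each monomial is sent to a scalar multiple of $\varnothing$.

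Next I would derive the key slide relation. Pushing a strand of the link through the sphere $S_i$ and around the corresponding $S^1$-factor carries it back to a nearby but distinct position; comparing the two configurations and resolving the crossings that appear with the Kauffman skein relation yields, for each $i$, a linear recursion expressing $z_i^{\,m}\cdot x$ in terms of $z_i^{\,m-1}\cdot x,\ z_i^{\,m-2}\cdot x,\dots$ for any skein $x$ supported in the other handles. The leading coefficient of this recursion is a nonzero element of $\Z[A,A^{-1}]$, hence a unit in $\mathbb{Q}(A)$; iterating the recursion therefore reduces every power of every $z_i$, and so every monomial, to a scalar multiple of $\varnothing$. This proves surjectivity of the map $\lambda\mapsto\lambda\,\varnothing$.

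Finally, for injectivity I would exhibit a $\mathbb{Q}(A)$-linear evaluation $K(\#_g(S^1\times S^2))\to\mathbb{Q}(A)$ sending $\varnothing\mapsto 1$ (the Hoste--Przytycki evaluation, or equivalently the value of the associated Witten--Reshetikhin--Turaev bracket), which is a left inverse of $\lambda\mapsto\lambda\,\varnothing$ and hence forces this map to be an isomorphism. The hard part is the slide relation and, above all, checking that its leading coefficient is invertible: over $\Z[A,A^{-1}]$ the very same relations yield only torsion, so it is exactly the passage to the field $\mathbb{Q}(A)$ that collapses the torsion and leaves the rank-one free module generated by $\varnothing$.
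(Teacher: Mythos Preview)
The paper does not actually prove this statement; it simply attributes the result to Hoste--Przytycki and to Bullock--Frohman--Kania-Bartoszynska. Your sketch, by contrast, outlines the argument that is in fact carried out in those references: push every link into one handlebody of a genus-$g$ splitting (whose skein module is the polynomial algebra $\mathbb{Q}(A)[z_1,\dots,z_g]$ on the cores), and then use the non-separating $2$-spheres to derive relations that, over $\mathbb{Q}(A)$, collapse every monomial to a scalar multiple of $\varnothing$. So your approach is not different from the paper's; it is the content the paper is pointing to.

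Two small remarks. First, your description of the ``slide relation'' is a bit impressionistic. In practice the cleanest way to obtain the recursion is via the $2$-sphere intersection formula (the one the present paper records in its Fig.~\ref{figure:sphere}): a Jones--Wenzl colored strand meeting the sphere once vanishes unless the color is $0$, and two strands meeting the sphere may be capped off at the cost of dividing by $-A^2-A^{-2}$. Expanding $z_i^{\,m}$ in the Chebyshev/Jones--Wenzl basis and applying this kills all but the color-$0$ term, which is exactly your ``leading coefficient is a unit in $\mathbb{Q}(A)$'' observation. Second, your last sentence slightly overstates things: over $\Z[A,A^{-1}]$ the relations do not yield \emph{only} torsion. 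Hoste and Przytycki compute
\[
K(S^1\times S^2;\Z[A,A^{-1}])\ \cong\ \Z[A,A^{-1}]\ \oplus\ \bigoplus_{i\ge 1}\Z[A,A^{-1}]/(1-A^{2i+4}),
\]
so there is already a free rank-one summand generated by $\varnothing$, together with infinitely many torsion summands; it is precisely this torsion that vanishes after tensoring with $\mathbb{Q}(A)$. With that adjustment your injectivity argument (exhibiting an evaluation sending $\varnothing\mapsto 1$) is fine, and indeed unnecessary once one has the explicit module computation.
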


\begin{defn}
A framed link $L\subset \#_g(S^1\times S^2)$ determines a skein $L\in K(\#_g(S^1\times S^2))$ and as such it is equivalent to $\langle L \rangle \cdot \varnothing$ for a unique coefficient $\langle L \rangle \in \mathbb{Q}(A)$. This coefficient is by definition the \emph{Kauffman bracket} $\langle L \rangle$ of $L$.
\end{defn}

\begin{rem}\label{rem:tensor}
There is an obvious canonical linear map $K(M) \to K(M\# N)$ defined by considering a skein in $M$ inside $M\# N$. The linear map $K(\#_g(S^1\times S^2)) \mapsto K(\#_{g+1}(S^1\times S^2))$ sends $\varnothing$ to $\varnothing$ and hence preserves the bracket $\langle L \rangle$ of an $L\subset \#_g(S^1\times S^2)$.

This shows in particular that if $L$ is contained in a 3-ball, the bracket $\langle L \rangle$ is the same that we would obtain by considering $L$ inside $S^3$.
\end{rem}

The Kauffman bracket is not an invariant of links without framing. In fact it changes under Reidemeister moves of the first type: $\left\langle \pic{1.2}{0.3}{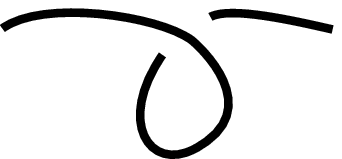} \right\rangle  =  -A^3 \left\langle \pic{1.2}{0.3}{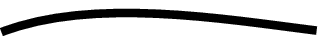} \right\rangle$. Two framings differ by adding either some positive twists, or some negative ones, they are equivalent to applying Reidemeister moves of the first type (see Fig.~\ref{figure:framing_change}). Thus if we change the framing of a link, its Kauffman bracket changes just by a multiplication of $-A^3$.

\begin{figure}[htbp]
$$
\pic{1.8}{0.4}{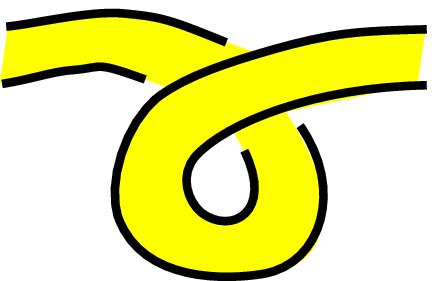} \leftrightarrow \pic{1.8}{0.4}{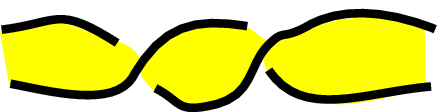} \leftrightarrow \pic{1.8}{0.4}{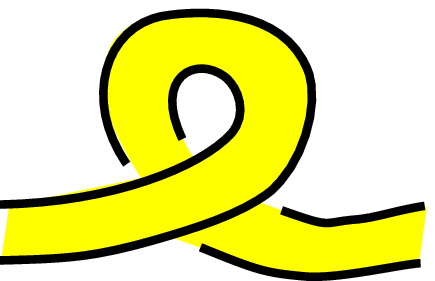} 
$$
\caption{A negative twist.}
\label{figure:framing_change}
\end{figure} 

There is a standard way to color the components of a framed link $L\subset M$ with a natural number and get a skein element of $K(M)$. These colorings are based on particular elements of the \emph{Temperley-Lieb algebra} called \emph{Jones-Wenzl projectors}. Coloring a component with $0$ is equivalent to remove it, while coloring with $1$ corresponds to consider the standard skein. Usually a component colored with $n$ is pictured with a square box with the letter ``$n$'' over the component, or over $n$ parallel copies of the component.

\subsection{Diagrams and moves}\label{subsec:diag}
It is well known that two link diagrams in $D^2$ represent the same link in $S^3$ if and only if they are related by Reidemeister moves. Since $S^1\times S^2$ is the double of a solid torus, we have that, given one such decomposition of $S^1\times S^2$, every link there can be isotoped in such solid torus. We call one such decomposition of $S^1\times S^2$ a \emph{H-decomposition}. The following theorem ensures that one such decomposition is unique:
\begin{theo}\label{theorem:Heegaard_split_S1xS2}
Every two embeddings of a torus in $S^1\times S^2$ that split it into two solid tori are isotopic.
\begin{proof}
See for instance the proof of \cite[Theorem 2.5]{Hatcher}.
\end{proof}
\end{theo}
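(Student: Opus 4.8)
The statement is the uniqueness up to isotopy of the genus-one Heegaard splitting of $S^1\times S^2$, so my plan is to reduce it to the uniqueness of the non-separating $2$-sphere. First I would analyze the gluing data. If $T$ splits $S^1\times S^2$ into two solid tori $V_1$ and $V_2$, choose a meridian disk $D_i\subset V_i$ and set $\mu_i=\partial D_i\subset T$. The homeomorphism type of $V_1\cup_T V_2$ is governed by the geometric intersection number $\Delta(\mu_1,\mu_2)$ on $T$: it equals $0$ precisely for $S^1\times S^2$, equals $1$ for $S^3$, and equals $p\geq 2$ for a lens space $L(p,q)$. Hence the two meridians must be isotopic simple closed curves on $T$, and after an isotopy of $D_2$ I may arrange $\mu_1=\mu_2=:\gamma$. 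Then $S:=D_1\cup_\gamma D_2$ is an embedded $2$-sphere meeting $T$ exactly in the single curve $\gamma$.

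Next I would show $S$ is non-separating. Cutting $V_i$ along its meridian disk turns it into a ball $B_i$, so cutting $S^1\times S^2$ along $S$ glues $B_1$ and $B_2$ along the annulus $T\setminus\gamma$; the result is connected, whence $S$ does not separate. Now I invoke the classical fact that the non-separating $2$-sphere in $S^1\times S^2$ is unique up to isotopy: its class generates $H_2$, and by Laudenbach's theorem homotopic embedded spheres are isotopic. Carrying $T$ along, I isotope $S$ to the standard sphere $S_0=\{\mathrm{pt}\}\times S^2$, so that $\gamma$ now lies on $S_0$ and splits it into the two disks bounded by the (images of the) meridians.

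The final step is to recover $T$ after cutting along $S_0$. Since $S_0$ is non-separating, cutting $S^1\times S^2$ along it yields $S^2\times[0,1]$, and $T$ becomes a properly embedded annulus $\widehat{A}=T\setminus\gamma$ with one boundary circle on each end sphere, both copies of $\gamma$. Crucially, $\widehat{A}$ separates $S^2\times[0,1]$ into the two balls $B_1$ and $B_2$. A properly embedded annulus in $S^2\times[0,1]$ with one essential boundary circle on each end and with both complementary regions balls is isotopic to the vertical product annulus $\gamma\times[0,1]$; regluing the two copies of $S_0$ then identifies $T$ with the standard Heegaard torus, so any two such $T$ are isotopic.

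I expect the main obstacle to be this last step: controlling $\widehat{A}$ inside $S^2\times[0,1]$. Because $\pi_1(S^2\times[0,1])=1$, the annulus $\widehat{A}$ is \emph{compressible}, so one cannot simply quote the classification of incompressible surfaces in a product; instead one must exploit the strong constraint that the complementary pieces are balls, straightening $\widehat{A}$ by an innermost-disk/irreducibility argument carried out inside $B_1$ and $B_2$. A secondary delicate point is making the isotopy of $S$ to $S_0$ ambient while keeping $T\cap S_0$ equal to the single curve $\gamma$; in a fully self-contained treatment this requires an innermost-circle argument removing the inessential intersections of $T$ with the sphere before the uniqueness statement is applied.
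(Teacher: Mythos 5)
Your reduction in the first three paragraphs is correct and appealing. From $H_1(S^1\times S^2)=\Z$ one gets that the meridians have zero intersection number on $T$, hence are isotopic; $S=D_1\cup_\gamma D_2$ is then an embedded non-separating sphere meeting $T$ in a single circle, and Laudenbach's theorem (or the classical uniqueness of the non-separating sphere) lets you replace $S$ by the standard sphere $S_0$, carrying $T$ along. Note, incidentally, that the ``secondary delicate point'' you flag is not actually an issue: once the isotopy of $S$ to $S_0$ is made ambient by the isotopy extension theorem, $T\cap S_0$ is automatically the single curve $\gamma$, with no innermost-circle cleanup required. For comparison, the paper offers no argument of its own at all: it simply refers to the proof of Theorem 2.5 in Hatcher's notes, where genus-one splittings are standardized by a direct isotopy/intersection argument; so your route through the non-separating sphere is a genuinely different organization of the proof.

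The gap is your last step, and it is not a technicality: the annulus lemma carries the entire content of the theorem, and the method you propose for it does not close. First, ``irreducibility'' must be used with care, since $S^2\times[0,1]$ is not irreducible (level spheres bound no balls); also, ``essential boundary circle on each end'' is vacuous, as no circle on a sphere is essential. More seriously, the natural implementation of your sketch runs in a circle: compressing $\widehat{A}$ along a compressing disk $D$ produces two disks which one can indeed show are boundary-parallel, but $\widehat{A}$ is recovered from them by tubing along the arc dual to $D$, and the hypothesis ``both complementary regions are balls'' translates exactly into ``this dual arc is a properly embedded arc from $S^2\times\{0\}$ to $S^2\times\{1\}$ whose exterior is a ball''. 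Proving that such an arc is isotopic to a vertical arc is the statement you started with (the arc is a core of one of the Heegaard solid tori), so innermost-disk surgery reproduces the problem rather than solving it. The known ways to finish are different in kind: either a direct combinatorial isotopy argument as in the source the paper cites, or smoothing-theory input --- Smale's $\mathrm{Diff}(S^2)\simeq O(3)$, extension of sphere diffeomorphisms over the ball, and $\pi_0\,\mathrm{Diff}(S^2\times[0,1]\ \mathrm{rel}\ \partial)\cong\Z/2\Z$, using the full rotation about the axis of $\gamma$ (which preserves $\gamma\times[0,1]$ and represents the generator of $\pi_1(SO(3))$) to kill a possible Gluck twist. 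Finally, even granting your lemma as stated, the conclusion does not follow: you allow the isotopy of $\widehat{A}$ to move its boundary circles independently on the two end spheres, and such an isotopy does not reglue to an isotopy of tori in $S^1\times S^2$, because at intermediate times the two boundary circles no longer match under the gluing and the image is not a closed surface. You need the lemma rel boundary, after first standardizing $\gamma$ inside $S_0$, and the rel-boundary statement is precisely where the $\pi_1(SO(3))$ issue above enters.
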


The solid torus $V$ is the natural 3-dimensional thickening $V\cong A\times [-1,1]$ of the annulus $A= S^1\times [-1,1]$. Hence, once a proper embedding of the annulus in the solid torus is fixed, every link in $S^1\times S^2$ can be represented by a link diagram in the annulus $S^1\times[-1,1]$. The embedding of the annulus is not unique up to isotopies, two of them differ by the application of some twists.

Clearly Reidemesister moves still do not change the represented link, but they are not sufficient to connect all the diagrams representing the same link. 

Now we describe a new move (see Fig.~\ref{figure:new_move}) that will be used later to prove that all the hypothesis of Theorem~\ref{theorem:Tait_conj} are needed, namely that the pairs of diagrams in Fig.~\ref{figure:no_Tait}  and Fig.~\ref{figure:no_Tait_qr} represent the same knot. The move is essentially the second Kirby move. Given a diagram $D$ in the annulus and a proper embedding of the annulus in the solid torus $V$, we get a position (an embedding not up to isotopy) of the link $L\subset V \subset S^1\times S^2$ described by $D$. We embed the solid torus in $\mathbb{R}^3$ in the standard way so that the image of the embedded annulus lies on $\mathbb{R}^2\subset \mathbb{R}^3$. Then we add a 0-framed meridian of the solid torus. We have obtained a surgery presentation of the pair $(L,S^1\times S^2)$ in $\mathbb{R}^3\subset S^3$ that is in regular position with respect to $\mathbb{R}^2\subset \mathbb{R}^3$. We apply the second Kirby move to a component of $L$ along the 0-framed meridian along an obious band. This gives another surgery presentation of $(L,S^1\times S^2)$ consisting of a link $L'$ in the solid torus encircled by the 0-framed meridian. The link $L'$ is still in regular position and hence gives another diagram of $L$ in the annulus.

\begin{figure}[htbp]
$$
\picw{4.8}{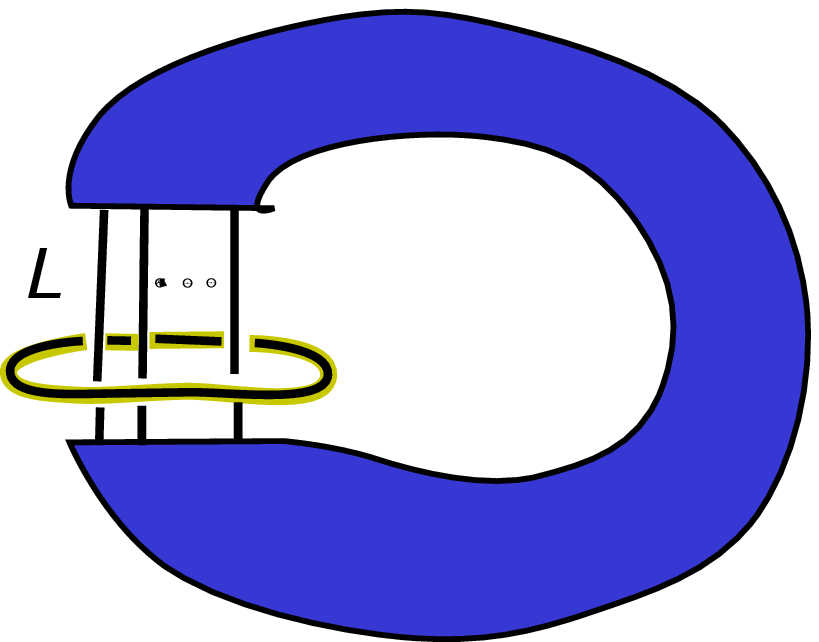} \leftrightarrow \picw{4.8}{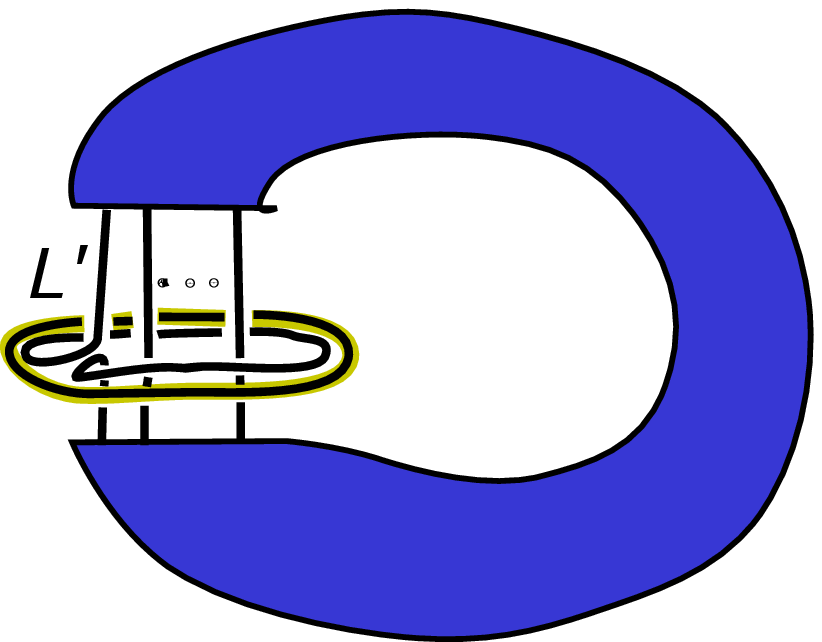}
$$
\caption{A new move on diagrams in the annulus. The links $L$ and $L'$ represent two different links in the solid torus but they represent the same in $S^1\times S^2$. The highlighted circle is the 0-framed meridian needed to get a surgery presentation of $(L,S^1\times S^2)$. The highlighted circle is not contained in the solid torus, hence the diagrams in the annulus do not contain its projection.}
\label{figure:new_move}
\end{figure}

\begin{quest}\label{quest:reid_moves}
Once fixed the proper embedding of the annulus, are Reidemeister moves together with the move described above sufficient to connect all the diagrams in the annulus representing the same link in $S^1\times S^2$?
\end{quest}

Once a proper embedding of the annulus in a solid torus of the H-decomposition is fixed and given a diagram $D\subset S^1\times [-1,1]$ of a link $L\subset S^1\times S^2$, we can get a diagram $D'\subset S^1\times [-1,1]$ that represents $L$ with the embedding of $S^1\times [-1,1]$ obtained from the previous one by adding a twist following the move described in Fig.~\ref{figure:twist_diagr}.

\begin{figure}[htbp]
\beq
\picw{4.8}{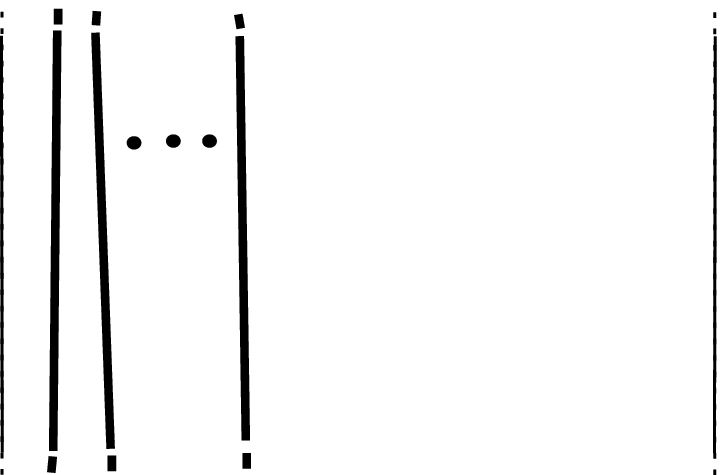} & \longrightarrow & \picw{4.8}{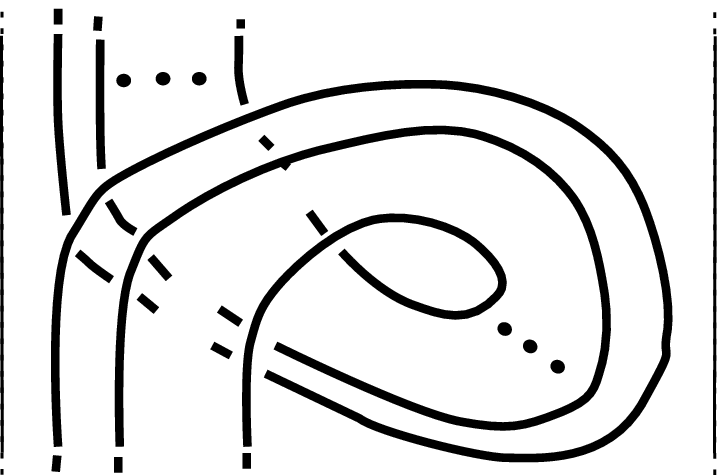} \\
D & & D' 
\eeq
\caption{Two diagrams of the same link in $S^1\times S^2$, the embedding of the annulus for $D'$ differs from the one of $D$ by the application of a positive twist. The diagrams differ just in the pictured portion that is diffeomorphic to $[-1,1]\times (-1,1)$.}
\label{figure:twist_diagr}
\end{figure}

\begin{defn}\label{defn:alt_cr_num}
A link diagram $D\subset S^1\times [-1,1]$ is \emph{alternating} if the parametrization of its components $S^1\rightarrow D\subset S^1\times [-1,1]$ meets overpasses and underpasses alternately.

Let $L$ be a link in $S^1\times S^2$. The link $L$ is \emph{alternating} if there is an alternating diagram $D\subset S^1\times [-1,1]$ that represents $L$ with a proper embedding of the annulus in a solid torus of the H-decomposition. The \emph{crossing number} of $L$ is the minimal number of crossings that a link diagram $D\subset S^1\times [-1,1]$ must have to represent $L$ with some embedding of the annulus.
\end{defn}

\begin{rem}
Let $\varphi: S^1\times S^2\rightarrow S^1\times S^2$ be a diffeomorphism and let $L\subset S^1\times S^2$ be a link with a fixed position ($L$ is just a sub-manifold, it is not up to isotopies). Suppose that $L$ is in regular position for a proper embedded annulus $A\subset V_1 \subset S^1\times S^2$ in a solid torus of the H-decomposition $S^1\times S^2= V_1\cup V_2$, $V_1\cong V_2\cong S^1\times D^2$. Hence the couple $(L,A)$ defines a link diagram $D\subset S^1\times [-1,1]$. Then the link $\varphi(L)$ is in regular position for the annulus $\varphi(A)$ that is proper embedded in $\varphi(V_1)$. By Theorem~\ref{theorem:Heegaard_split_S1xS2} $\varphi(V_1) = V_j$ (up to isotopy) for some $j=1,2$. The couple $(\varphi(L), \varphi(A))$ still defines the diagram $D\subset S^1\times [-1,1]$. Therefore the crossing number and the condition of being alternating are invariant under diffeomorphisms of $S^1\times S^2$.
\end{rem}

\section{The extended Tait conjecture}
In this section we explain why a natural extension of the Tait conjecture is false for $\Z_2$-\emph{homologically non trivial} links, we show that all links have a \emph{quasi-simple} diagram in the annulus, we show that we can not simply replace ``simple'' with ``quasi-simple'' in Theorem~\ref{theorem:Tait_conj}, and we give other notions and results in order to state and prove the main theorems.

\subsection{Alternating diagrams for $\Z_2$-homologically non trivial links}
\begin{prop}\label{prop:no_Tait}
Once an embedding of the annulus is fixed, two link diagrams in $S^1\times [-1,1]$ differing only in part of the annulus diffeomorphic to $(-1,1)\times [-1,1]$ where they are of the form
$$
\pic{1.8}{0.4}{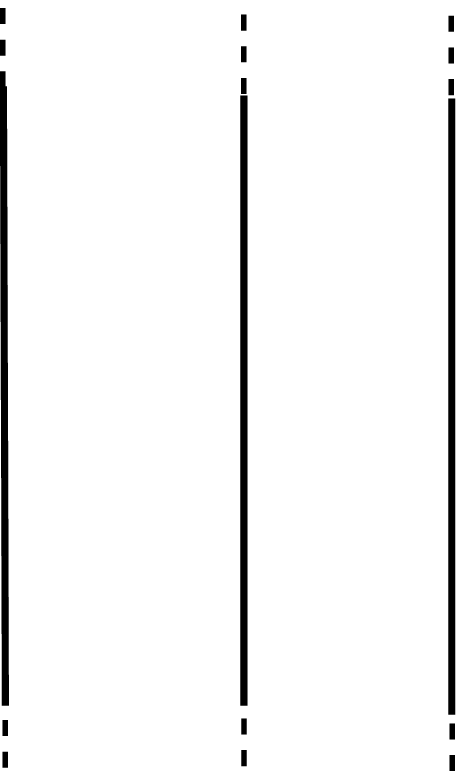} \ \ \ \text{and} \ \ \ \pic{1.8}{0.4}{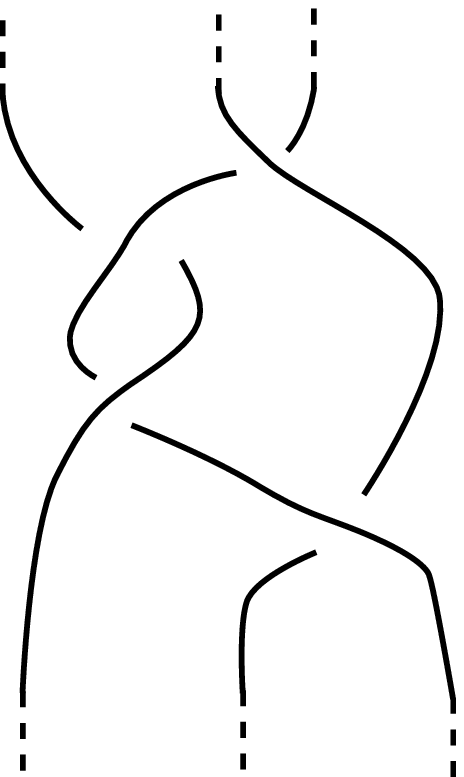},
$$
represent the same link in $S^1\times S^2$.
\begin{proof}
Let us consider the diagram with three parallel strands. If we apply the second Kirby move on the left strand of the initial diagram over a 0-framed unknot encircling the three strands (the move described in Subsection~\ref{subsec:diag}), we get the second diagram of the sequence in Fig.~\ref{figure:prop_no_Tait} or its mirror image. To get the third diagram of Fig.~\ref{figure:prop_no_Tait} we apply some Reidemeister moves. The fourth one is obtained applying again the previous moves to the third diagram but on the opposite sense. The other equivalences of Fig.~\ref{figure:prop_no_Tait} come applying Reidemeister moves.

\end{proof}
\end{prop}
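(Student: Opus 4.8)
The plan is to realize the claimed equivalence as a single application of the ``new move'' from Subsection~\ref{subsec:diag} (the second Kirby move performed over a $0$-framed meridian of the solid torus), followed by a cleanup using only ordinary Reidemeister moves. Since both diagrams lie in the same embedded annulus and differ only inside the rectangle $(-1,1)\times[-1,1]$, it suffices to exhibit a chain of moves supported near that rectangle carrying one local picture to the other while leaving the rest of the diagram (the blue part) untouched.

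First I would pass to the surgery description of the pair $(L,S^1\times S^2)$: embed the solid torus $V$ standardly in $\mathbb{R}^3\subset S^3$ with the annulus lying on $\mathbb{R}^2$, and adjoin a $0$-framed meridian $\mu$ of $V$, so that $(L,\mu)$ becomes a surgery presentation of $(L,S^1\times S^2)$. Any two annulus diagrams whose surgery presentations are related by Reidemeister moves together with handle slides over $\mu$ represent the same link in $S^1\times S^2$; this implication is exactly the content of the new move, and it is all the argument requires. (I neither need nor claim the converse, which is the open Question~\ref{quest:reid_moves}.)

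Next I would take the local picture consisting of three parallel strands, place a $0$-framed unknot encircling all three, and slide the leftmost strand over this unknot along the obvious band. This produces the second diagram of the sequence in Figure~\ref{figure:prop_no_Tait}, or its mirror image depending on the crossing signs. A short run of Reidemeister moves then straightens it into the third diagram; performing the same band slide in the opposite sense yields the fourth, and the remaining frames are joined by Reidemeister moves alone. Reading the composite chain from beginning to end gives the asserted equivalence of the two local pictures.

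The one place where genuine care is needed is the bookkeeping of framings and crossing signs. I would have to verify that the band along which the slide runs stays inside the rectangle and introduces no twisting, so that no framing correction or stray first Reidemeister curl leaks out into the blue part, and that the resulting over/under information matches the target picture rather than its mirror. This is why the proof is best presented as an explicit frame-by-frame sequence of diagrams: the verification is inherently diagrammatic and cannot be replaced by an abstract count of moves.
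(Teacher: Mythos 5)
Your proposal is correct and follows essentially the same route as the paper's own proof: a handle slide of the leftmost strand over the $0$-framed meridian (the move of Subsection~\ref{subsec:diag}), a cleanup by Reidemeister moves, a second slide in the opposite sense, and Reidemeister moves to finish, exactly as in Fig.~\ref{figure:prop_no_Tait}. Your explicit remark that only the easy implication of Question~\ref{quest:reid_moves} is used (not its converse) is a welcome clarification, but it does not change the substance of the argument.
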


\begin{figure}
\beq
 & \pic{2.2}{0.5}{diagrp1.eps}  \ \  \leftrightarrow\ \  \pic{2.6}{0.5}{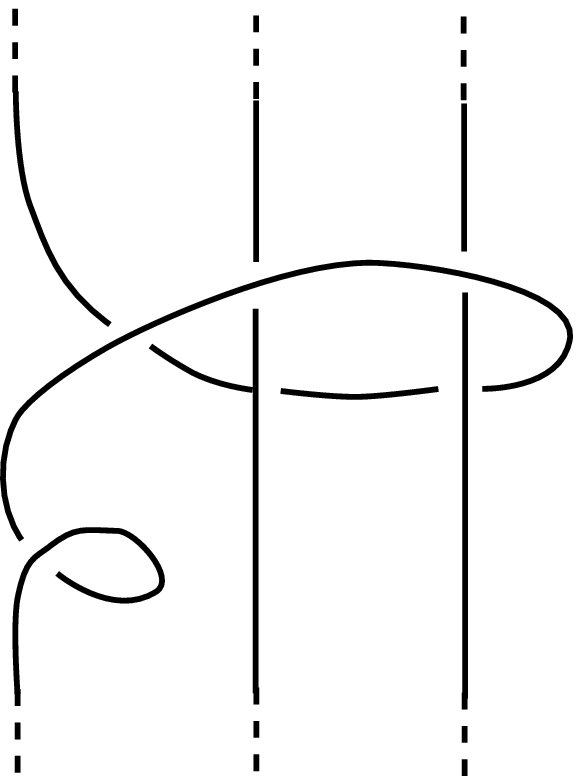} \ \  \leftrightarrow\ \  \pic{2.6}{0.5}{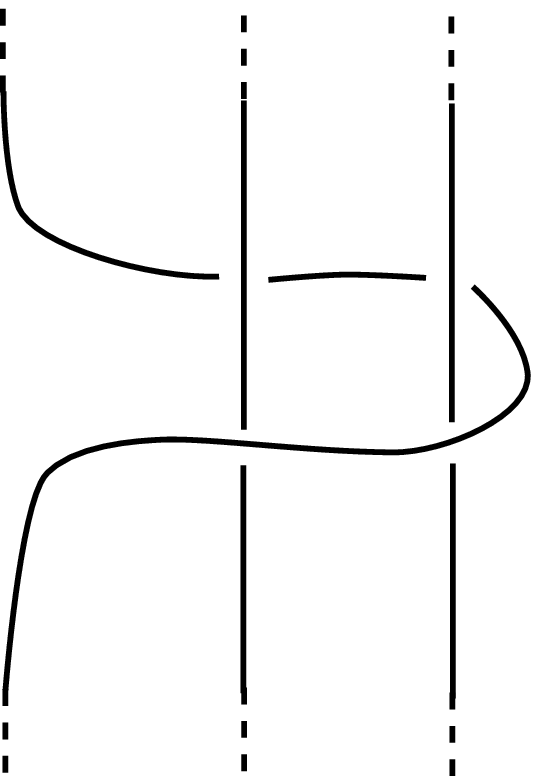} & \\
 & \leftrightarrow\ \  \pic{2.6}{0.5}{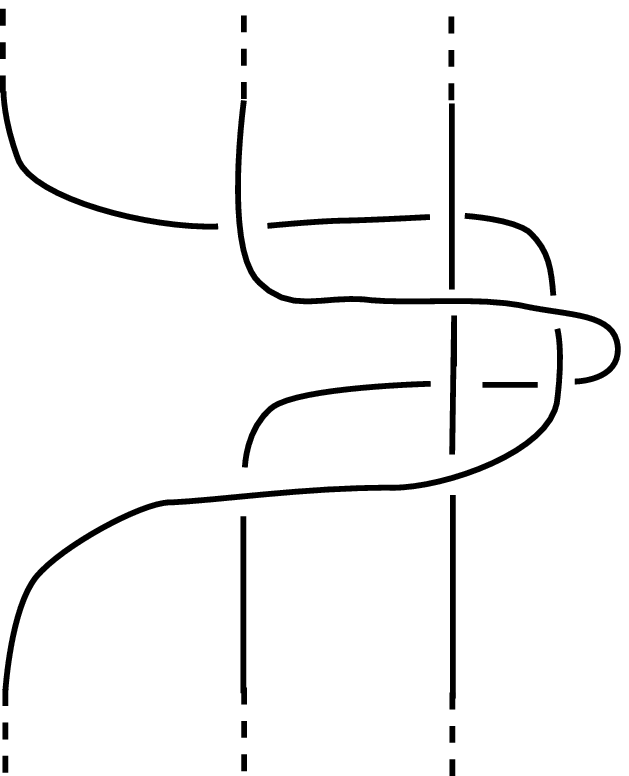}  \ \  \leftrightarrow\ \  \pic{2.6}{0.5}{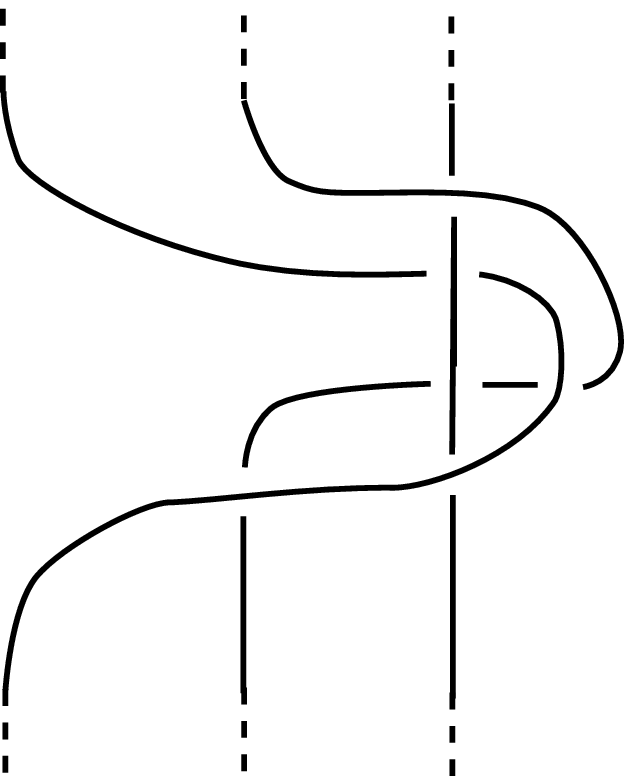}  \ \ \leftrightarrow \ \ \pic{2.6}{0.5}{diagrp2.eps}  . & 
\eeq
\caption{Moves on diagrams intersecting $(-1, 1)\times [-1,1]\subset S^1\times [-1,1]$ in three parallel strands.}
\label{figure:prop_no_Tait}
\end{figure}

Proposition~\ref{prop:no_Tait} implies that, once the embedding of the annulus is fixed, the diagrams in Fig.~\ref{figure:no_Tait} represent the same knot $K$ in $S^1\times S^2$. The knot $K$ is $\Z_2$-homologically non trivial. The diagrams are all alternating and simple but they have different numbers of crossings. Therefore the natural extension of the Tait conjecture for $\Z_2$-homologically non trivial links in $S^1\times S^2$ is false.

\subsection{Reducing diagrams}\label{subsec:non_red}
In $S^3$ every link diagram with the minimal number of crossings is reduced (a diagram without crossings as the ones of Fig.~\ref{figure:reducedDS3}). It would be nice if in $S^1\times S^2$ every link different from the knot with crossing number $1$ (see Fig.~\ref{figure:rem_adeq}-(left)) had a simple diagram (Definition~\ref{defn:reduced}) with the minimal number of crossings. Clearly every diagram $D\subset S^1\times [-1,1]$ can be replaced by another diagram $D'$ without crossings as the ones of Fig.~\ref{figure:reducedD}, with no more crossings ($n(D') \leq n(D)$, where $n(D)$ is the number of crossings of $D$) and representing the same link in $S^1\times S^2$ by the same proper embedding of the annulus in a solid torus of the H-decomposition. Therefore if the link does not intersect twice a non separating 2-sphere, it has a simple diagram with the minimal number of crossings, thus almost all links have one such diagram. Applying some Reidemeister moves we can put all the crossings as the ones in Fig.~\ref{figure:reducedD2} in the same band, one after the other (see the first move in Fig.~\ref{figure:screw}). We can suppose that these crossings are all of the same type: the band is represented by an alternating part of diagram (see the second move of Fig.~\ref{figure:screw}). If we apply the move described in Subsection~\ref{subsec:diag} we add or remove two crossings as the ones in Fig.~\ref{figure:reducedD2} (see the last three moves in Fig.~\ref{figure:screw}). Therefore, given a $n$-crossing diagram $D$ with $k$ crossings as the ones in Fig.~\ref{figure:reducedD2}, we can get another diagram $D'$ representing the same link which has $n- k + \bar k$ crossings, where $\bar k =0$ if $k\in 2\Z$ and $\bar k =1$ if $k \in 2\Z+1$, and has $\bar k$ crossings as the ones in Fig.~\ref{figure:reducedD2}. That means that every diagram of a link in $S^1\times S^2$ can be replaced by a quasi-simple diagram (Definition~\ref{defn:quasi-reduced}) with less crossings.

\begin{quest}\label{quest:qred_no_red}
Are there links in $S^1\times S^2$ with crossing number bigger than $1$ and not admitting a simple diagram in the annulus with the minimal number of crossings? Is it true that links in $S^1\times S^2$ which are alternating, non H-split (Definition~\ref{defn:split}), intersecting twice a non separating 2-sphere, $\Z_2$-homologically trivial and not bounding an orientable surface, do not have a simple diagram in the annulus with the minimal number of crossings? Are these links the only ones not admitting such diagrams?
\end{quest}

\begin{figure}[htbp]
$$
\picw{1.5}{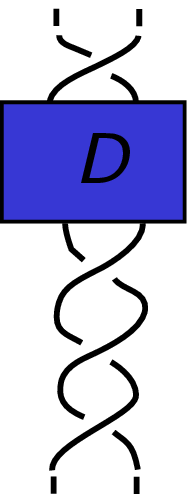} \ \ \leftrightarrow\ \  \picw{1.5}{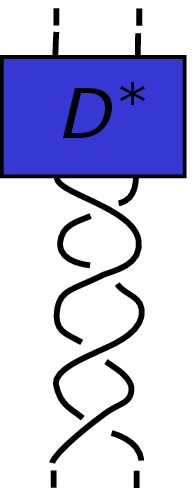} \ \ \leftrightarrow \ \ \picw{1.5}{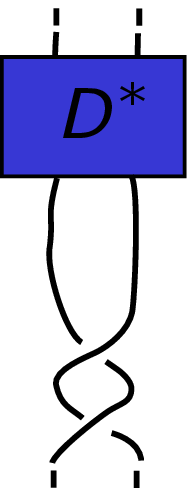}\ \  \leftrightarrow
$$
$$
\leftrightarrow\ \ \picw{1.5}{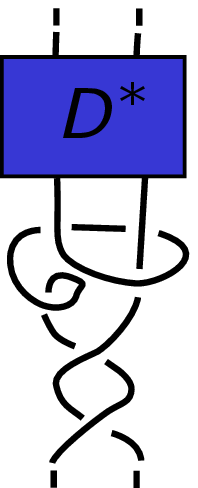}\ \  \leftrightarrow \ \ \picw{1.5}{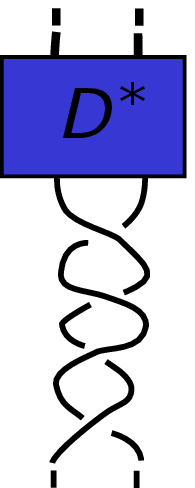} \ \ \leftrightarrow\ \  \picw{1.5}{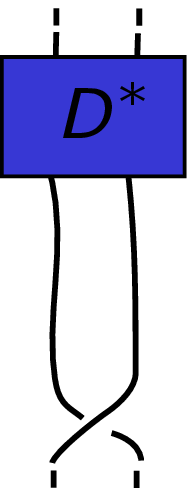}
$$
\caption{Moves for the crossings as the ones in Fig.~\ref{figure:reducedD2}. The part of diagram $D^*$ is the mirror image of $D$.}
\label{figure:screw}
\end{figure}

Applying the above moves we can substitute a crossing as the ones in Fig.~\ref{figure:reducedD2} with its mirror image and get another diagram representing the same link in $S^1\times S^2$. Therefore we can perform the moves in Fig.~\ref{figure:no_Tait_qr_moves} to get diagrams of the same link in $S^1\times S^2$. This shows that the diagrams in Fig.~\ref{figure:no_Tait_qr} represent the same knot. Therefore Theorem~\ref{theorem:Tait_conj} becomes false if we simply replace ``simple'' with ``quasi-simple''.

\begin{figure}[htbp]
$$
\picw{2}{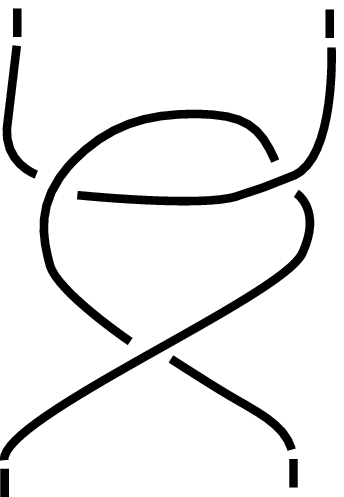} \ \ \leftrightarrow \ \  \picw{2}{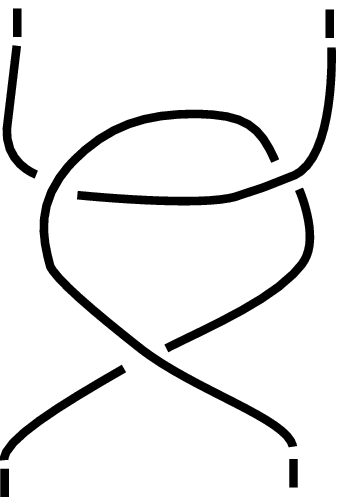} \ \ \leftrightarrow \ \ \picw{2}{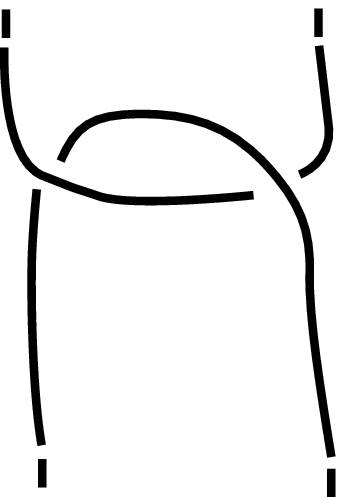}
$$
\caption{Moves in $(-1, 1)\times [-1,1] \subset S^1\times[-1,1]$ showing that the diagrams in Fig.~\ref{figure:no_Tait_qr} represent the same knot.}
\label{figure:no_Tait_qr_moves}
\end{figure}

\subsection{New notions and results}
The equalities in Fig.~\ref{figure:Cheb1} and Fig.~\ref{figure:sphere} are well known relations in skein theory. The one in Fig.~\ref{figure:Cheb1} takes place in the solid torus and holds for colored parallel copies of the core with trivial framing (the framing given by the embedded annulus where the copies lie) where $c \geq 2$. The ones in Fig.~\ref{figure:sphere} take place in a neighborhood of an embedded 2-sphere intersected once or twice by the link. It holds for every colors $i,j\geq 0$ and $d_{i,j}$ is the Kronecker delta: $d_{i,i}:=1$ and $d_{i,j}:=0$ for $i\neq j$.

\begin{figure}
\begin{center}
\includegraphics[width = 5.6 cm]{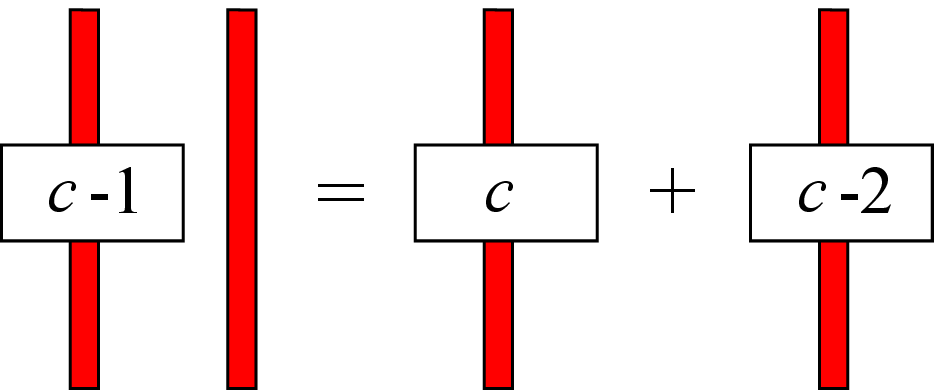}
\caption{A move in the skein space of the solid torus ($c \geq 2$).}
\label{figure:Cheb1}
\end{center}
\end{figure}

\begin{figure}
\begin{center}
\includegraphics[width = 10 cm]{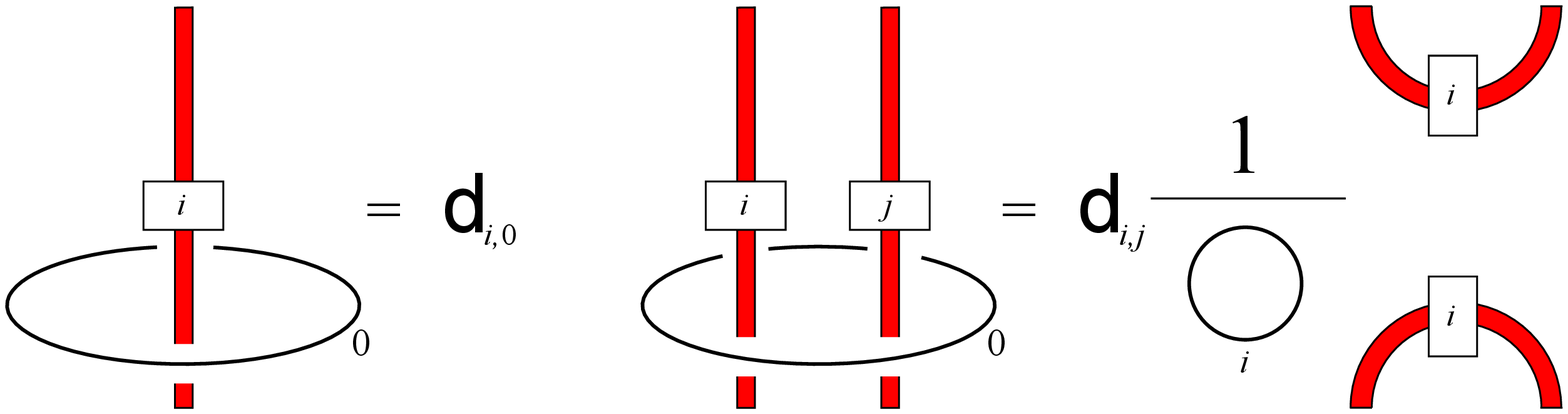}
\caption{Sphere intersection. The symbol $d_{i,j}$ is the Kronecker delta.}
\label{figure:sphere}
\end{center}
\end{figure}

\begin{lem}\label{lem:parallel_cores}
Let $L\subset S^1\times S^2$ be the framed link consisting of $k\geq 0$ parallel copies of the core $S^1\times \{x\}$ with the trivial framing (the blackboard framing given by a diagram without crossings). Then $\langle L \rangle =0$ if $k\in 2\Z+1$, otherwise it is a positive integer.
\begin{proof}
This proof is based on the description of a computation. In Fig.~\ref{figure:ex_lem} is shown as an example the case $k=3$. Let $K$ be the core of $S^1\times S^2$ with the trivial framing. In each step we get a linear combination with positive integers of framed links consisting of colored copies of $K$: in each framed link there is one copy with a non negative color, while the others are colored with $1$. Applying the equality of Fig.~\ref{figure:Cheb1} to each summand we fuse two components, one of them has color $1$ and the other one has the maximal color of that framed link. We apply this equality until we get a linear combination with positive integer coefficients of links consisting just of one colored copy of $K$. The colors of the final summands are all odd if $k\in 2\Z+1$, otherwise they are all even and the coefficient that multiplies the empty set (the copy colored with $0$) is non null. The equality of Fig.~\ref{figure:sphere}-(left) says that all the summands except the one with color $0$ are null.
\end{proof}
\end{lem}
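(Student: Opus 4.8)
The plan is to compute $\langle L\rangle$ entirely inside the skein algebra of the solid torus, reducing the $k$ parallel color-$1$ cores to a single colored core and then evaluating with the one-point sphere relation. Write $e_c$ for the skein element given by the core $S^1\times\{x\}$ colored with $c$ (with the trivial framing), so that $e_0=\varnothing$, $e_1$ is the core itself, and $L=e_1^{\,k}$, the product being taken as $k$ parallel copies. Since $K(S^1\times S^2)\cong\mathbb{Q}(A)$ is generated by $\varnothing$, the bracket $\langle L\rangle$ is exactly the coefficient of $e_0$ obtained when $e_1^{\,k}$ is expanded in the cores $e_j$.

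First I would expand $e_1^{\,k}$ in the $\{e_j\}$ using the fusion relation of Fig.~\ref{figure:Cheb1}, which reads $e_1\cdot e_c=e_{c+1}+e_{c-1}$ for $c\geq 2$, together with the base case $e_1\cdot e_1=e_2+e_0$ (the $c=1$ fusion, the only one producing the empty core). Applying this repeatedly, always fusing the color-$1$ component with the component of maximal color, yields $e_1^{\,k}=\sum_{j\geq 0} c_{k,j}\,e_j$, where the coefficients satisfy the recursion $c_{k,j}=c_{k-1,j-1}+c_{k-1,j+1}$ with $c_{0,0}=1$ and the convention $c_{k,-1}=0$. Because every fusion has coefficient $+1$, an immediate induction shows that each $c_{k,j}$ is a nonnegative integer; concretely $c_{k,j}$ counts the length-$k$ lattice paths from $0$ to $j$ with steps $\pm1$ that stay $\geq 0$, i.e.\ the ballot numbers.

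Next I would apply the one-point sphere relation of Fig.~\ref{figure:sphere}-(left), which states $\langle e_j\rangle=d_{j,0}$: a single core colored $j$ crosses the non-separating $2$-sphere once and is therefore skein-equivalent to $d_{j,0}\,\varnothing$. Combining the two steps gives $\langle L\rangle=\sum_j c_{k,j}\,d_{j,0}=c_{k,0}$, which is a nonnegative integer.

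Finally the dichotomy follows from parity: each fusion changes the color by $\pm1$, so $c_{k,j}=0$ unless $j\equiv k\pmod 2$. When $k$ is odd this forces $c_{k,0}=0$, hence $\langle L\rangle=0$. When $k$ is even the path that repeatedly goes up and then down witnesses $c_{k,0}\geq 1$ (indeed $c_{k,0}$ is the Catalan number $C_{k/2}$), so $\langle L\rangle$ is a positive integer. The only genuinely delicate point is the bookkeeping that keeps the coefficients nonnegative integers through the recursion and the separate treatment of the $c=1$ fusion, since Fig.~\ref{figure:Cheb1} is stated only for $c\geq 2$; once the base relation $e_1e_1=e_2+e_0$ is in hand, both the positivity for even $k$ and the vanishing for odd $k$ are automatic.
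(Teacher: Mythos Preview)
Your proof is correct and follows essentially the same approach as the paper's: iteratively fuse one color-$1$ core into the component of largest color via the Chebyshev relation, reduce to a nonnegative integer combination of single colored cores $e_j$, and then kill every $e_j$ with $j>0$ using the one-point sphere relation. Your write-up is in fact more explicit than the paper's --- you make the recursion $c_{k,j}=c_{k-1,j-1}+c_{k-1,j+1}$ precise, interpret the coefficients as ballot numbers, and flag the $c=1$ base case $e_1e_1=e_2+e_0$ that the paper uses tacitly (Fig.~\ref{figure:Cheb1} is stated only for $c\geq 2$); your identification $c_{k,0}=C_{k/2}$ also anticipates the Proposition that follows the lemma.
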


\begin{figure}
\begin{center}
\includegraphics[width = 5.6 cm]{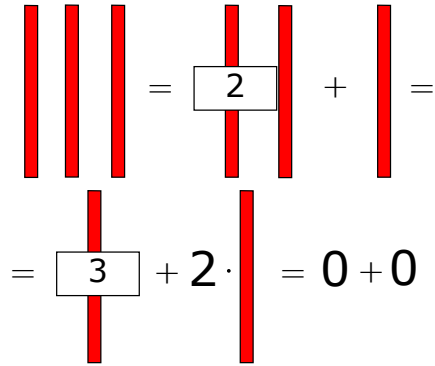}
\caption{The computation of the Kauffman bracket of three parallel copies of the core of $S^1\times S^2$.}
\label{figure:ex_lem}
\end{center}
\end{figure}

\begin{defn}
For every integer $k\geq 0$ we denote with $\alpha(k)$ the skein of $k$ parallel copies of the core of $S^1\times S^2$ with the trivial framing.
\end{defn}
By Lemma~\ref{lem:parallel_cores} $\alpha(k)$ is a positive integer for every even $k$, and $\alpha(k)=0$ for $k\in 2\Z+1$.

\begin{prop}
$$
\alpha(2n) = \frac{1}{n+1} \binom{2n}{n} .
$$
\begin{proof}
See \cite[Corollary 5]{HP2}.
\end{proof}
\end{prop}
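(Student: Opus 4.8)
The plan is to compute $\alpha(2n)$ entirely inside the skein algebra of the solid torus and then transport the answer to $S^1\times S^2$ through the sphere relation of Fig.~\ref{figure:sphere}. It is standard in skein theory that over $\mathbb{Q}(A)$ the skein algebra $K(S^1\times D^2)$ of the solid torus is the polynomial algebra generated by the core $z$, and that a convenient $\mathbb{Q}(A)$-basis is given by the \emph{colored cores} $S_k$ (the core cabled with the $k$-th Jones--Wenzl projector, $k\geq 0$). These satisfy $S_0=1$, $S_1=z$ and the recoupling recursion $z\cdot S_k=S_{k+1}+S_{k-1}$ (with the convention $S_{-1}=0$), which is exactly the fusion content underlying the move of Fig.~\ref{figure:Cheb1}.

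First I would interpret $\alpha(2n)$ as the image of a monomial. The framed link made of $2n$ parallel trivially framed copies of the core is, as a skein element in the solid torus, precisely $z^{2n}$. Expanding it in the colored-core basis, $z^{2n}=\sum_{k\geq 0}N_{2n,k}\,S_k$, the recursion above shows that the coefficient $N_{2n,k}$ counts the lattice paths of length $2n$ that start at height $0$, end at height $k$, take unit steps $\pm1$, and never go below $0$. In particular each $N_{2n,k}$ is a nonnegative integer, and $N_{2n,k}=0$ unless $k$ is even.

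Next I would push forward to $S^1\times S^2$. The core passes exactly once through the nonseparating sphere $\{pt\}\times S^2$, so Fig.~\ref{figure:sphere}-(left) yields $\langle S_k\rangle=d_{k,0}$, i.e. every colored core is killed unless its color is $0$. Hence $\alpha(2n)=\langle z^{2n}\rangle=\sum_{k}N_{2n,k}\,d_{k,0}=N_{2n,0}$, which incidentally re-derives the parity and positivity statements of Lemma~\ref{lem:parallel_cores}.

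It then remains to evaluate $N_{2n,0}$, the number of Dyck paths of length $2n$; this is the combinatorial heart of the argument and the step I expect to require the most care. I would compute it by the reflection principle: the number of $\pm1$ paths from $0$ to $0$ of length $2n$ is $\binom{2n}{n}$, and the ones dipping below $0$ are in bijection, by reflecting the tail past the first visit to height $-1$ about the line of height $-1$, with unrestricted paths from $0$ to $-2$, counted by $\binom{2n}{n-1}$. This gives $N_{2n,0}=\binom{2n}{n}-\binom{2n}{n-1}=\frac{1}{n+1}\binom{2n}{n}$, which is the asserted value. The one subtlety to verify is that the recoupling move of Fig.~\ref{figure:Cheb1} introduces no denominators, so that these integer path counts are genuinely the expansion coefficients of $z^{2n}$ rather than rational functions of $A$.
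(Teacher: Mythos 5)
Your argument is correct, but it takes a genuinely different route from the paper, because the paper offers no argument at all: its ``proof'' is a citation to \cite[Corollary 5]{HP2}. Your proof is self-contained and, pleasingly, uses only the two skein relations the paper itself records (Fig.~\ref{figure:Cheb1} and Fig.~\ref{figure:sphere}). The individual steps all check out: the identification of $K(S^1\times D^2)$ with $\mathbb{Q}(A)[z]$ and the Chebyshev-type recursion $z\cdot S_k=S_{k+1}+S_{k-1}$ for the colored cores are standard; the convention $S_{-1}=0$ is exactly what encodes the ``never below height $0$'' constraint, so the expansion coefficients $N_{2n,k}$ of $z^{2n}$ really are the ballot-path counts you describe, and they are manifestly integers because the recursion is monic with integer coefficients (so your closing ``subtlety'' is resolved correctly); killing every $S_k$ with $k>0$ via the sphere relation is precisely how the paper's own Lemma~\ref{lem:parallel_cores} concludes; and the reflection-principle evaluation $N_{2n,0}=\binom{2n}{n}-\binom{2n}{n-1}=\frac{1}{n+1}\binom{2n}{n}$ is correct. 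In effect you have reproved the cited result of Hoste--Przytycki by pushing the strategy of Lemma~\ref{lem:parallel_cores} from a parity statement to an exact count. What the citation buys the paper is brevity; what your route buys is independence from the reference, a proof that subsumes the positivity and parity claims of Lemma~\ref{lem:parallel_cores}, and, as a bonus, the bracket of the $k$-colored core configuration $z^{m}$ expanded in ballot numbers for every $m$, not just the constant term.
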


\begin{defn}
Let $D$ be a link diagram in the annulus. A \emph{Kauffman state} of $D$ is a function $s$ from the set of crossings of $D$ to $\{1,-1\}$. The assignment of $\pm 1$ to a crossing determines a unique way to remove that crossing as described in Fig.~\ref{figure:splitting}. Hence a state removes all the crossings producing a finite collection of non intersecting circles in the surface. This collection  of circle is called the \emph{splitting}, or the \emph{resolution}, of $D$ with $s$. Let $sD$ be the number of homotopically trivial circles of the splitting of $D$ with $s$, and let $p(s)$ be the number of homotopically non trivial circles. Furthermore $\sum_i s(i)$ is the sum of all the signs associated to the crossings by $s$.
\end{defn}

\begin{rem}\label{rem:p(s)}
Given a state $s$, $p(s)$ is odd if and only if the link is $\Z_2$-homologically non trivial.
\end{rem}

\begin{figure}[htbp]
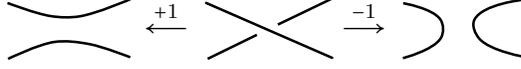

$$
\pic{1.6}{0.4}{Acanalep.eps} \ \stackrel{+1}{\longleftarrow} \ \pic{1.6}{0.4}{incrociop.eps} \ \stackrel{-1}{\longrightarrow} \ \pic{1.6}{0.4}{Bcanalep.eps}
$$
\caption{The splitting of a crossing.}
\label{figure:splitting}
\end{figure}

Proceeding by induction and splitting the crossings we get the following:
\begin{prop}\label{prop:state_sum}
Let $L$ be a framed link in $S^1\times S^2$ and $D\subset S^1\times[-1,1]$ be a diagram that represents $L$ with a fixed embedding of the annulus. Then
$$
\langle L \rangle = \sum_s \langle D \ |\ s \rangle  \ \in \Z[A, A^{- 1}] ,
$$
where the sum is taken over all the Kauffman states of $D$ and
$$
\langle D \ | \ s \rangle := \alpha( p(s) ) A^{\sum_i s(i)} (-A^2 -A^{-2})^{sD} .
$$
\end{prop}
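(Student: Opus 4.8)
The plan is to prove the identity by induction on the number $n$ of crossings of $D$, applying the first (crossing-resolving) skein relation one crossing at a time, exactly as in the classical state-sum derivation in $S^3$. The only genuinely new feature is the bookkeeping of the homotopically nontrivial circles, which in $S^1\times S^2$ no longer bound disks and therefore must be accounted for by the factor $\alpha(p(s))$ rather than by a power of $(-A^2-A^{-2})$.

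For the inductive step, I would choose a crossing $c$ of $D$ and apply the skein relation there, writing $\langle D\rangle = A\langle D_+\rangle + A^{-1}\langle D_-\rangle$, where $D_{\pm}$ are the two crossingless resolutions of $c$, each carrying the blackboard framing and having $n-1$ crossings. A Kauffman state $s$ of $D$ is the same datum as a choice of sign $s(c)=\pm 1$ together with a state $s'$ of the corresponding $D_{\pm}$, and resolving $c$ first does not alter the isotopy type of the final collection of circles; hence $p(s)=p(s')$ and $sD = s'D_{\pm}$, while the prefactor $A^{\pm 1}$ is absorbed into $A^{\sum_i s(i)}$. Applying the inductive hypothesis to $\langle D_{\pm}\rangle$ and summing over the two choices at $c$ then produces exactly $\sum_s \langle D\mid s\rangle$.

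The base case $n=0$ is where the new content sits. Here $D$ is a disjoint union of simple closed curves embedded in the annulus $S^1\times[-1,1]$, with $sD$ of them homotopically trivial and $p(s)$ of them homotopically nontrivial, and there is a single (empty) state. I would first peel off the trivial circles: an innermost homotopically trivial circle bounds a disk in the annulus meeting nothing else, hence is a split unknot lying in a ball, so the second skein relation $L\sqcup\bigcirc=(-A^2-A^{-2})L$ removes it at the cost of a factor $(-A^2-A^{-2})$; iterating yields the total factor $(-A^2-A^{-2})^{sD}$. What remains is $p(s)$ disjoint homotopically nontrivial simple closed curves in the annulus; since any such family consists of mutually parallel copies of the core and the blackboard framing is the trivial one, this is precisely the framed link whose skein is $\alpha(p(s))$ by definition. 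This establishes $\langle D\rangle = \alpha(p(s))(-A^2-A^{-2})^{sD}$, completing the base case; in particular $\langle L\rangle\in\Z[A,A^{-1}]$, since each $\alpha(p(s))$ is an integer by Lemma~\ref{lem:parallel_cores}.

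I expect the main obstacle to be the two topological claims in the base case: that each homotopically trivial circle can legitimately be split off as an unknot (requiring the innermost-disk argument so that the bounding disk is disjoint from the rest of the diagram), and that the homotopically nontrivial circles really are unknotted parallel cores with the trivial framing, so that the definition of $\alpha$ applies verbatim. Both rest on the elementary but essential fact that disjoint essential simple closed curves in an annulus are all parallel to its core; once this is in hand, the remaining algebra is routine.
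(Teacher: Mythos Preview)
Your proposal is correct and follows essentially the same approach as the paper, which simply states that the formula is obtained ``proceeding by induction and splitting the crossings''; you have merely supplied the details the paper omits, including the careful treatment of the base case where the homotopically nontrivial circles are identified with parallel copies of the core so that $\alpha(p(s))$ applies.
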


\begin{prop}\label{prop:0Kauff}
Let $L$ be a framed link in $S^1\times S^2$. Suppose that the $\Z_2$-homology class of $L$ is non trivial
$$
0 \neq [L]  \in H_1(S^1\times S^2; \Z_2) .
$$
Then
$$
\langle L \rangle = 0 .
$$
\begin{proof}
Let $D\subset S^1\times [-1,1]$ be a diagram of $L$ for some embedding of the annulus. By Remark~\ref{rem:p(s)} for every state $s$ of $D$ the number $p(s)$ is odd. Therefore by Proposition~\ref{prop:state_sum} all the summands of $\langle L \rangle$ are null.
\end{proof}
\end{prop}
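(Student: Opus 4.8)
The plan is to reduce everything to the state-sum formula of Proposition~\ref{prop:state_sum}. Fix any diagram $D\subset S^1\times[-1,1]$ representing $L$ for some embedding of the annulus. By that proposition the bracket expands as $\langle L\rangle=\sum_s\langle D\mid s\rangle$ over all Kauffman states $s$, and each term carries the factor $\alpha(p(s))$. The decisive feature is that $\alpha$ kills odd arguments: by Lemma~\ref{lem:parallel_cores} we have $\alpha(k)=0$ whenever $k\in 2\Z+1$. Hence the whole sum collapses to $0$ as soon as one knows that $p(s)$ is odd for \emph{every} state $s$. So the proposition follows once this single parity statement is established.

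The heart of the matter is therefore the claim that $p(s)$, the number of homotopically nontrivial circles in the resolution, is always odd when $[L]\neq 0$ in $H_1(S^1\times S^2;\Z_2)$. I would prove this homologically. Smoothing a crossing in either of the two ways of Fig.~\ref{figure:splitting} does not change the $\Z_2$-homology class of the underlying $1$-manifold, because the two smoothings differ by a small circle bounding a disk and hence null-homologous; iterating over all crossings shows that the disjoint union of resolution circles is $\Z_2$-homologous to $L$. Now $H_1(S^1\times S^2;\Z_2)\cong\Z_2$ is generated by the core $S^1\times\{x\}$: a homotopically trivial circle in the annulus bounds a disk and is null-homologous, while each homotopically nontrivial circle is core-parallel and represents the generator. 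Summing classes mod $2$ gives $[L]=p(s)\cdot[\mathrm{core}]$, so $[L]\neq 0$ forces $p(s)$ to be odd. This is precisely the content of Remark~\ref{rem:p(s)}, which I may invoke directly.

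Combining the two steps finishes the argument: for a $\Z_2$-homologically nontrivial $L$ every state has $p(s)$ odd, so $\alpha(p(s))=0$ and each summand $\langle D\mid s\rangle$ vanishes, whence $\langle L\rangle=0$. The only nonformal point is the parity claim of the second paragraph; once one accepts that the $\Z_2$-homology class is read off by counting core-wrapping circles and that smoothings preserve it, the rest is pure bookkeeping. I expect no genuine obstacle beyond correctly setting up this homological count, which is already encapsulated in Remark~\ref{rem:p(s)}.
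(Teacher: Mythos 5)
Your proposal is correct and follows essentially the same route as the paper: expand $\langle L\rangle$ via the state sum of Proposition~\ref{prop:state_sum}, invoke the parity statement of Remark~\ref{rem:p(s)} to see that $p(s)$ is odd for every state, and conclude each summand vanishes since $\alpha(p(s))=0$ for odd arguments by Lemma~\ref{lem:parallel_cores}. The only difference is that you also sketch a (correct) homological justification of Remark~\ref{rem:p(s)}, which the paper states without proof.
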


\begin{rem}
In $S^3$ the Kauffman bracket of every framed link $L\subset S^3$ is always non null. In fact we have that for such links the evaluation of the Kauffman bracket in $- 1$ is $\langle L\rangle|_{A=- 1} = (-2)^k$ where $k$ is the number of components of $L$. Therefore $\langle L \rangle$ can not be $0$. We can get the same result for homotopically trivial links in $S^1\times S^2$. By Proposition~\ref{prop:0Kauff}, it is natural to ask if the Kauffman bracket of a framed link in $S^1\times S^2$ is null if and only if the link is $\Z_2$-homologically non trivial. The answer to this question is no. In Fig.~\ref{figure:ex_0Kauf} is shown a $\Z_2$-homologically trivial knot in $S^1\times S^2$ whose Kauffman bracket is $0$. By Theorem~\ref{theorem:Tait_conj_Jones}, such $\Z_2$-homologically trivial links can not have a connected, simple and alternating diagrams in the annulus.
\end{rem}

\begin{figure}
\begin{center}
\includegraphics[scale=0.55]{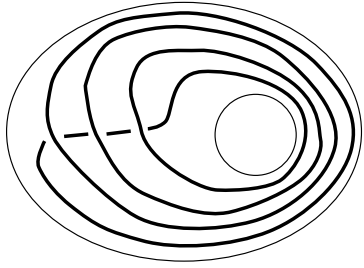} 
\end{center}
\caption{A $\Z_2$-homologically trivial knot in $S^1\times S^2$ whose Kauffman bracket is $0$.}
\label{figure:ex_0Kauf}
\end{figure}

\begin{defn}\label{def:breadth}
Let $f$ be a non null Laurent polynomial. We denote with $B(f)$ the difference between the highest degree of the non zero monomials in $f$ and the lowest one. If $f=0$ we define $B(f):=0$. We call it the \emph{breadth} of $f$.
\end{defn}
Note that the breadth of the Kauffman bracket of a link remains the same if we modify its framing.

\begin{defn}
Let $D\subset S^1\times [-1,1]$ be a $n$-crossing diagram of a link in $S^1\times S^2$. We denote with $s_+$ and $s_-$ the constant states of $D$ assigning respectively always $+1$ and always $-1$. The diagram $D$ is said to be \emph{plus-adequate} if $s_+D > sD$ for every $s$ such that $\sum_i s(i)=n-2$, namely for every state $s$ differing from $s_+$ only at a crossing. It is said to be \emph{minus-adequate} if $s_-D > sD$ for every $s$ such that $\sum_i s(i) = 2-n$. The diagram $D$ is \emph{adequate} if it is both plus-adequate and minus-adequate.

A link diagram is \emph{connected} if it is so as a 4-valent graph.
\end{defn}

\begin{rem}\label{rem:adeq}
If the diagram is contained in a 2-disk, being plus-adequate (resp. minus-adequate) is equivalent to the following fact: in the splitting of the diagram $D$ corresponding to $s_+$ (resp. $s_-$), the two strands replacing a crossing of $D$ lie on two different components of that splitting of $D$ (remark after \cite[Definition 5.2]{Lickorish}). If the diagram is not contained in a 2-disk the two conditions are not related: Fig.~\ref{figure:rem_adeq} shows two diagrams in the annulus satisfying the condition described above, but only the diagram on the right is plus-adequate.
\end{rem}

\begin{figure}
\begin{center}
\includegraphics[scale=0.55]{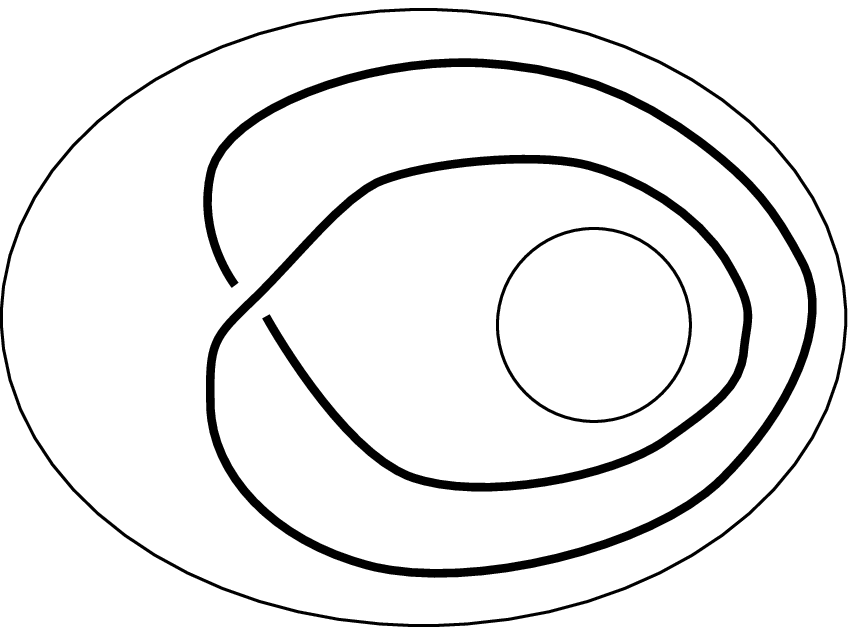} 
\hspace{0.5cm}
\includegraphics[scale=0.55]{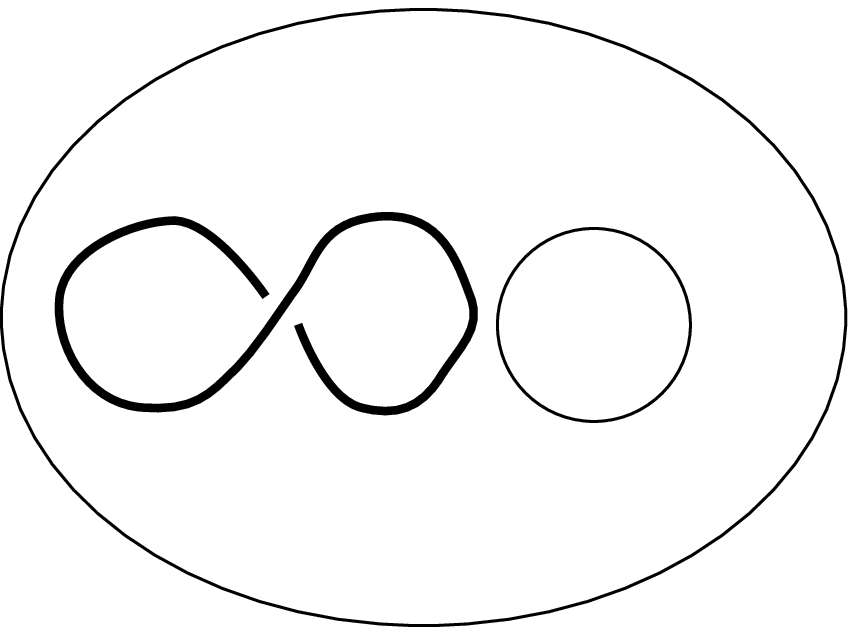}
\end{center}
\caption{Two diagrams satisfying the condition of Remark~\ref{rem:adeq}. Only the one on the right is plus-adequate.}
\label{figure:rem_adeq}
\end{figure}

In the following proposition we show how the technical condition of adequacy is implied by some simple geometric properties.
\begin{prop}\label{prop:reduced_D}
Once an embedding of the annulus is fixed, every simple, alternating, connected diagram of a $\Z_2$-homologically trivial link in $S^1\times S^2$ is adequate.
\begin{proof}
Every link diagram $D\subset S^1\times [-1,1]$ divides the annulus in 2-dimensional connected \emph{regions}. We say that a region is \emph{external} if it touches the boundary of the annulus, otherwise it is \emph{internal}. There are two cases: 
\begin{enumerate}
\item{$D$ is contained in a 2-disk,}
\item{$D$ is not contained in a 2-disk.}
\end{enumerate} 
Since $D$ is connected we have that in the first case the internal regions are disks and there is only one external region that is a punctured annulus. In the second case the internal regions are disks, and the external regions are two annuli. We can give to the regions a black/white coloring as in a chessboard. The link represented by $D$ is $\Z_2$-homologically trivial, hence the external regions are colored in the same way. We say that a boundary component of a region is \emph{internal} if it is different from a boundary component of the annulus $S^1\times \{\pm 1\}$. Since $D$ is alternating the splitting with $s_+$ is equal to the union of the internal boundary components of all the black regions or the white ones (see Fig.~\ref{figure:black-white}). We assume that the components of the splitting with $s_+$ bound the black regions, hence the splitting of $D$ with $s_-$ is equal to the union of the internal boundary components of all the white regions. Changing the splitting of a crossing either merges two different components of the splitting of $D$, or divides one component in two. The diagram $D$ is not plus-adequate if and only if there is a crossing $j$ in $D$ such that after splitting every crossing different from $j$ in the positive way we have a situation as in Fig.~\ref{figure:cases_lem}: if we change the splitting from $+1$ to $-1$ on $j$ we get one more homotopically trivial component by
\begin{enumerate}
\item{dividing in two a previous homotopically trivial component (Fig.~\ref{figure:cases_lem}-(left));}
\item{dividing in two a homotopically non trivial component (Fig.~\ref{figure:cases_lem}-(center));}
\item{fusing two homotopically non trivial components (Fig.~\ref{figure:cases_lem}-(right)).}
\end{enumerate}
In the various cases this implies that
\begin{enumerate}
\item{there is a black region that is adjacent twice to the same crossing;}
\item{the considered homotopically non trivial component must bound an external region that is a black annulus adjacent twice to the same crossing;}
\item{the two considered homotopically trivial components must bound the two external annulus regions and these are black and adjacent to the same crossing.}
\end{enumerate}
The first two cases happen only if the crossing is as in Fig.~\ref{figure:reducedD}. The third case happens only if the crossing is as in Fig.~\ref{figure:reducedD2}. Therefore all these cases are avoided by our hypothesis. In the same way we prove the minus-adequacy. 
\end{proof}
\end{prop}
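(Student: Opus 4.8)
The plan is to translate adequacy into a statement about the checkerboard coloring of the complementary regions of $D$, and then to read the forbidden local pictures off the coloring. First I would record the global structure of the regions of $D$ in the annulus $S^1\times[-1,1]$. Since $D$ is connected every internal region is a disk, so the only non-disk regions meet $\partial(S^1\times[-1,1])$; hence there are exactly two cases, according to whether $D$ lies in a $2$-disk (one external region, a punctured annulus) or does not (two external annular regions). The $\Z_2$-homological hypothesis enters only here: when there are two external annuli, the fact that $[L]=0\in H_1(S^1\times S^2;\Z_2)$ forces them to receive the \emph{same} color, because a path from one boundary circle of the annulus to the other crosses $D$ an even number of times.

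Next I would use that $D$ is alternating to set up the fundamental dictionary between states and regions: after fixing which color is which, the resolution $s_+D$ is exactly the union of the internal boundary curves of the black regions, and $s_-D$ the union of those of the white regions. This reduces the all-$+$ and all-$-$ states to the geometry of region boundaries.

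With this dictionary in hand I would reduce plus-adequacy to a local condition. By definition $D$ fails to be plus-adequate precisely when some crossing $j$, flipped from $+1$ to $-1$ while every other crossing is kept at $+1$, strictly increases the number $sD$ of null-homotopic circles. Flipping one crossing is a band surgery that either splits one circle of $s_+$ into two or fuses two into one, and the $\Z_2$-homology class in the annulus (namely $0$ for a null-homotopic circle and $1$ for a core-parallel one) is additive under this move. A short bookkeeping then shows that the only ways to create a null-homotopic circle are (i) splitting a trivial circle into two trivial ones, (ii) splitting a non-trivial circle into a trivial one and a non-trivial one, and (iii) fusing two non-trivial circles into a trivial one; these are exactly the three cases in the statement.

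Finally I would interpret (i)--(iii) through the region dictionary. Case (i) forces a single black (disk) region to be adjacent to $j$ at two of its corners; case (ii) forces the non-trivial circle to bound a black \emph{external} annulus meeting $j$ twice; and case (iii) forces the two non-trivial circles to bound the two external black annuli, which can only occur when both meet at $j$. Reading these configurations off the diagram, (i) and (ii) make $j$ a crossing of the type in Fig.~\ref{figure:reducedD}, while (iii) makes $j$ a crossing of the type in Fig.~\ref{figure:reducedD2}; all of these are excluded because $D$ is simple. Minus-adequacy follows by the same argument with the two colors interchanged (equivalently $s_+\leftrightarrow s_-$). I expect the main obstacle to be exactly this last case analysis, and in particular the treatment of the homotopically non-trivial circles: as Remark~\ref{rem:adeq} warns, in the annulus the convenient $S^3$-criterion ``the two strands lie on distinct components'' is no longer equivalent to adequacy, so cases (ii) and (iii)---which have no counterpart in the disk---must be argued directly from the external-region structure.
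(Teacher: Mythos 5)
Your proposal is correct and follows essentially the same route as the paper's proof: the same region/checkerboard dictionary for alternating diagrams, the same use of $\Z_2$-triviality to force the two external regions to share a color, and the same three-case analysis (splitting a trivial circle, splitting a non-trivial circle, fusing two non-trivial circles) matched against the forbidden crossings of Fig.~\ref{figure:reducedD} and Fig.~\ref{figure:reducedD2}. Your added justification that a path between the two boundary circles of the annulus meets $D$ an even number of times makes explicit a point the paper leaves implicit, but the argument is otherwise the same.
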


\begin{figure}[htbp]
$$
\pic{1.6}{0.4}{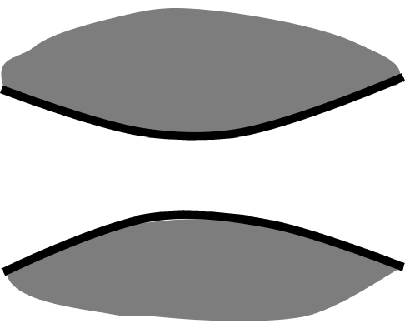} \ \stackrel{+1}{\longleftarrow} \ \pic{1.6}{0.4}{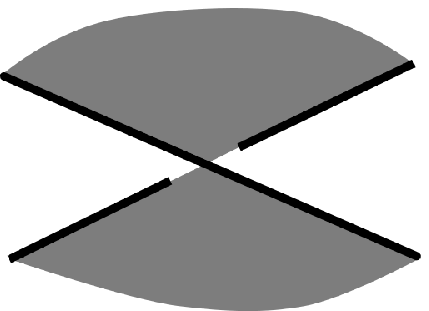} \ \stackrel{-1}{\longrightarrow} \ \pic{1.6}{0.4}{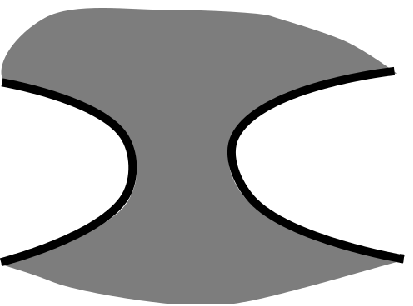}
$$
\caption{The colors of the
regions near a crossing in an alternating diagram are all of this
type. Therefore the splitting with $s_+$ (resp. $s_-$) gives the internal boundary components of all the black (white) regions.}
\label{figure:black-white}
\end{figure}

\begin{figure}[htbp]
\begin{center}
\parbox[c]{3cm}{ 
\includegraphics[scale=0.4]{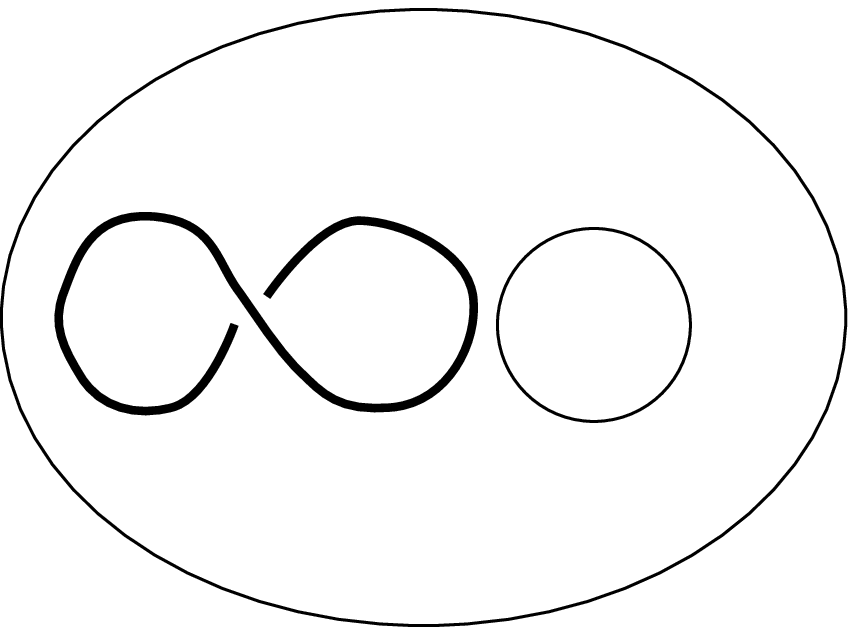} \\
\includegraphics[scale=0.4]{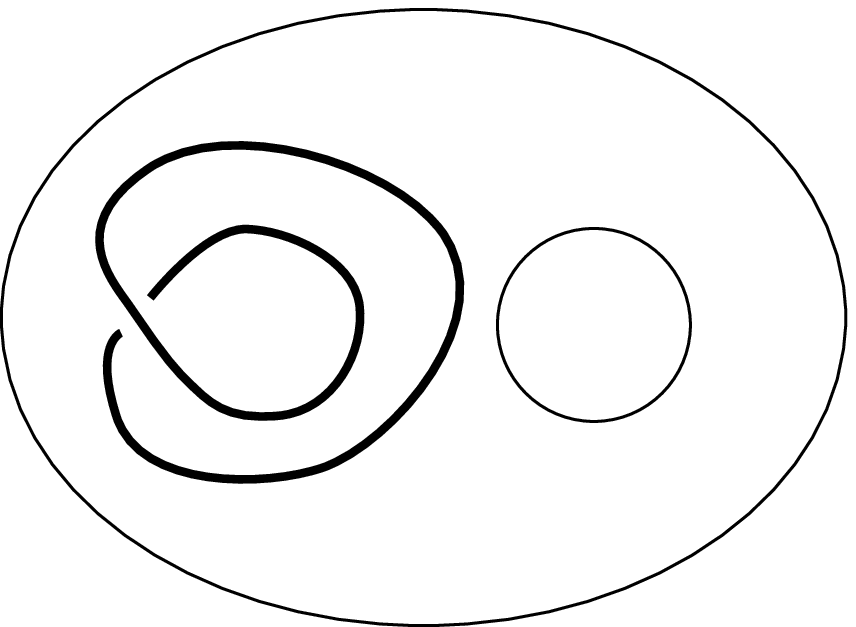} }
\hspace{0.35cm}
\parbox[c]{3cm}{  \includegraphics[scale=0.4]{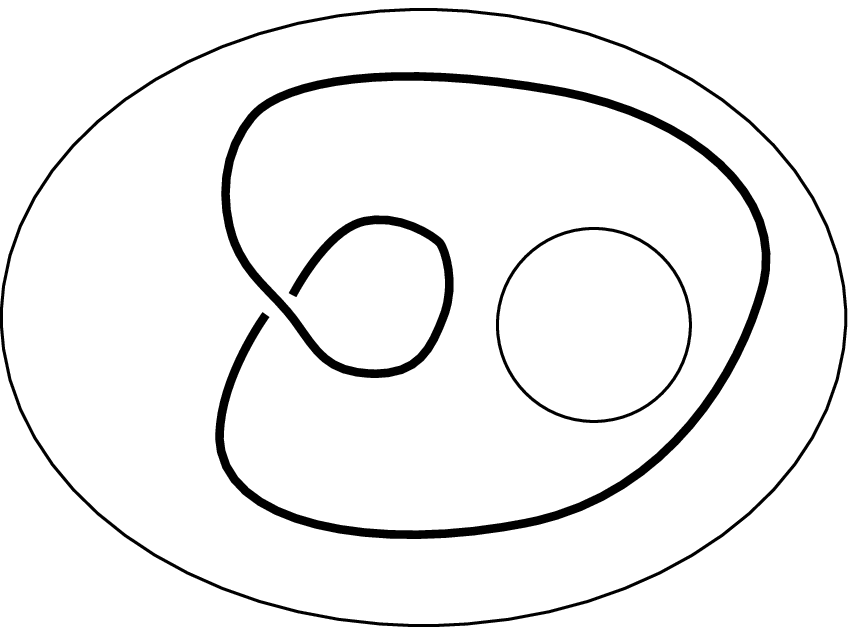} }
\hspace{0.35cm}
\parbox[c]{3cm}{ \includegraphics[scale=0.4]{rem_adeq1.eps} }
\end{center}
\caption{The cases described in the proof of Proposition~\ref{prop:reduced_D} and Lemma~\ref{lem:ineq1}.}
\label{figure:cases_lem}
\end{figure}

Finally we introduce another topological notion that is related to a condition on diagrams. 
\begin{defn}\label{defn:split}
A link $L \subset S^1\times S^2$ is said to be \emph{H-split} if there are two non empty sub-links, $L_1$ and $L_2$, such that $L= L_1\cup L_2$ and they are separated by either a trivial sphere or a Heegaard torus. That means that either there is a 3-ball $B$ such that $L_1\subset B$ and $L_2\subset (S^1\times S^2)\setminus B$, or there are two disjoint solid tori, $V_1$ and $V_2$ such that $S^1\times S^2= V_1\cup V_2$, $L_1\subset V_1$ and $L_2\subset V_2$ ($\partial V_1= \partial V_2$ is the Heegaard torus). 
\end{defn}
This is a natural extension of the usual notion of ``split'' in $S^3$. Clearly a link is H-split if and only if it may be represented by a non connected diagram in the annulus for any embedding of the annulus.

\section{Proof of the theorems}
In this section we prove Theorem~\ref{theorem:Tait_conj} and Theorem~\ref{theorem:Tait_conj_Jones}. We follow the classical proof of Kauffman, K. Murasugi and Thistlethwaite (see for instance \cite[Chapter 5]{Lickorish}) applying some modifications for our case.

\begin{lem}\label{lem:ineq1}
Fix an embedding of the annulus. Let $D\subset S^1\times [-1,1]$ be a $n$-crossing diagram of a $\Z_2$-homologically trivial link in $S^1\times S^2$. Then
$$
B(\langle D \rangle) \leq 2(n + s_+D + s_-D ) .
$$
Furthermore if $D$ is adequate the equality holds:
$$
B(\langle D \rangle) = 2(n + s_+D + s_-D ) .
$$
\begin{proof}
As said in Remark~\ref{rem:p(s)} the quantity $p(s)$ is always even. For every state $s$ we denote with $M(s)$ and $m(s)$ respectively the maximum and the minimum degree of the non zero monomials in $\langle D | s\rangle$ (see Proposition~\ref{prop:state_sum}). Hence
$$
B(\langle D \rangle) \leq \max_s M(s) - \min_s m(s) ,
$$
and the equality holds if there is a unique maximal $s$ for $M$ and a unique minimal $s$ for $m$. We have:
\beq
M(s) & = & \sum_i s(i) + 2sD ,\\
m(s) & = & \sum_i s(i) - 2sD .
\eeq
Let $s$ and $s'$ be two states differing only at a crossing $j$ where $s(j)=+1$ and $s'(j)=-1$. After the splitting of every crossing different from $j$ we have the cases pictured in Fig.~\ref{figure:cases_lem} up to mirror image:
\begin{enumerate}
\item{the crossing $j$ lies on a component of the diagram that is contained in a 2-disk (Fig.~\ref{figure:cases_lem}-(left)), here $s'D = sD \pm 1$ and $p(s')=p(s)$;}
\item{the crossing $j$ lies on a component of the diagram that is not contained in a 2-disk and is $\Z_2$-homologically non trivial (Fig.~\ref{figure:cases_lem}-(center)), here $s'D = sD \pm 1$ and $p(s')= p(s)$;}
\item{the crossing $j$ lies on a component of the diagram that is not contained in a 2-disk and is $\Z_2$-homologically trivial (Fig.~\ref{figure:cases_lem}-(right)), here $s'D = sD \pm 1$ and $p(s')= p(s) \mp 2$.}
\end{enumerate}
In each case we get
\beq
M(s) - M(s') & = & 2 \mp 2\ \geq 0 ,\\
m(s) - m(s') & = & 2 \pm 2 \ \geq 0 .
\eeq
Given a state $s$ we can connect it to $s_+$ finding a sequence of states $s_+=s_0 , s_1 \ldots , s_k =s$ such that $s_r$ differs from $s_{r+1}$ only at a crossing and $\sum_i s_r(i) = \sum_i s_{r+1}(i) +2$. Analogously for $s_-$. Hence $M(s) \leq M(s_+)$ and $m(s) \geq m(s_-)$. Thus we have the first statement. If the diagram is plus-adequate (resp. minus-adequate) it holds $M(s_+) - M(s)=4$ ($m(s) - m(s_-) =4$) for each $s$ such that $\sum_s s(i)= n-2$ ($= 2-n$). Namely we have a strict inequality already from the beginning of the sequence $s_0,\ldots, s_k$. This proves the second statement.
\end{proof}
\end{lem}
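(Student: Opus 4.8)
The plan is to expand $\langle D \rangle$ through the Kauffman state sum of Proposition~\ref{prop:state_sum} and to control the breadth by tracking, for each state $s$, only the top and bottom degrees of its contribution $\langle D \mid s\rangle = \alpha(p(s)) A^{\sum_i s(i)}(-A^2-A^{-2})^{sD}$. The decisive structural input is that $L$ is $\Z_2$-homologically trivial: by Remark~\ref{rem:p(s)} the number $p(s)$ of homotopically non trivial circles is even for \emph{every} state, so by Lemma~\ref{lem:parallel_cores} the scalar $\alpha(p(s))$ is a strictly positive integer. Thus no summand degenerates, and since $(-A^2-A^{-2})^{sD}$ has top term $(-1)^{sD}A^{2sD}$ and bottom term $(-1)^{sD}A^{-2sD}$, the maximal and minimal degrees of $\langle D\mid s\rangle$ are exactly $M(s)=\sum_i s(i)+2sD$ and $m(s)=\sum_i s(i)-2sD$. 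Consequently $B(\langle D\rangle)\le \max_s M(s)-\min_s m(s)$, with equality whenever the maximising and minimising states are unique, so that the extreme coefficients cannot cancel.

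First I would show that $M$ is maximised at $s_+$ and $m$ minimised at $s_-$. Given any $s$, I connect it to $s_+$ (resp.\ $s_-$) by a chain of states $s_0,\dots,s_k$, each obtained from the previous one by switching a single crossing from $-1$ to $+1$ (resp.\ from $+1$ to $-1$), so that $\sum_i s_r(i)$ changes by $2$ at each step. The core computation is local: if two states differ only at a crossing $j$, then switching $j$ either splits one circle of the resolution into two or fuses two circles into one, so $sD$ changes by exactly $\pm1$. Inspecting the three situations of Fig.~\ref{figure:cases_lem} -- the crossing lying on a piece contained in a disk, on a homotopically non trivial piece, or on a homotopically trivial piece not contained in a disk -- I record in each case how $sD$ and $p(s)$ move. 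In every case a short computation gives $M(s)-M(s')=2\mp2\ge0$ and $m(s)-m(s')=2\pm2\ge0$, whence $M(s)\le M(s_+)$ and $m(s)\ge m(s_-)$ along the chain. Since $\sum_i s_+(i)=n=-\sum_i s_-(i)$, this already yields $B(\langle D\rangle)\le M(s_+)-m(s_-)=2(n+s_+D+s_-D)$.

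For the equality under adequacy I would upgrade this weak monotonicity to strictness near the endpoints. Plus-adequacy says precisely that for every $s$ adjacent to $s_+$ one has $s_+D>sD$, i.e.\ $s_+D=sD+1$, which makes the last step of any chain ending at $s_+$ satisfy $M(s_+)-M(s')=4>0$; combined with the non decrease along the rest of the chain this forces $M(s)<M(s_+)$ for all $s\ne s_+$. Hence $s_+$ is the \emph{unique} maximiser of $M$, so the coefficient of $A^{M(s_+)}$ in $\langle D\rangle$ equals the single nonzero scalar $(-1)^{s_+D}\alpha(p(s_+))$ and cannot vanish. The symmetric argument with minus-adequacy makes $s_-$ the unique minimiser of $m$, so the bottom coefficient survives as well, and therefore $B(\langle D\rangle)=M(s_+)-m(s_-)=2(n+s_+D+s_-D)$.

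The step I expect to be the main obstacle is the local case analysis itself, because it is exactly where the annulus geometry departs from the classical $S^3$ argument: here a resolution circle may be homotopically essential, and switching a crossing can alter $p(s)$ (the third case of Fig.~\ref{figure:cases_lem}, where $p(s')=p(s)\mp2$) rather than merely $sD$. I must check that even when $p$ changes the coefficient $\alpha(p(s))$ stays nonzero -- which is guaranteed only by $\Z_2$-homological triviality -- and that the degree bookkeeping $M(s)-M(s')=2\mp2$, $m(s)-m(s')=2\pm2$ holds uniformly across all three cases with the correct matching of signs. Pinning down these signs, and verifying that adequacy rules out exactly the reducing configurations, is the delicate part; the remainder is the standard degree-counting bookkeeping of Kauffman--Murasugi--Thistlethwaite.
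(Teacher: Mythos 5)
Your proposal is correct and follows essentially the same route as the paper's proof: the state-sum expansion with $M(s)=\sum_i s(i)+2sD$, $m(s)=\sum_i s(i)-2sD$, monotonicity along chains of states via the three-case local analysis of Fig.~\ref{figure:cases_lem}, and adequacy forcing a strict drop at the step adjacent to $s_\pm$, hence unique extremal states and no cancellation. Your only additions are to make explicit two points the paper leaves implicit --- that $\Z_2$-homological triviality guarantees $\alpha(p(s))>0$ so every summand genuinely attains the degrees $M(s)$, $m(s)$, and that uniqueness of the maximiser/minimiser is what prevents the extreme coefficients from cancelling --- which are welcome clarifications, not deviations.
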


\begin{lem}\label{lem:ineq2}
Every $n$-crossing connected diagram $D$ of a $\Z_2$-homologically trivial link in $S^1\times S^2$ satisfies
$$
s_+D + s_- D  \leq \begin{cases}
n+2 & \text{if $D$ is contained in a 2-disk} \\
n & \text{otherwise}
\end{cases} .
$$
\begin{proof}
We proceed by induction on $n$. Suppose that $D$ is contained in a 2-disk. If $n=0$, $D$ can only be a homotopically trivial circle and hence the statement holds. If $D$ can not be contained in a 2-disk it must have at least one crossing. There are only two diagrams with one crossing and not contained in a 2-disk (one is the mirror image of the other): the one in Fig.~\ref{figure:cases_lem}-(right). Their Kauffman bracket are $-A^{\pm 3}$ and the statement holds. Suppose it is true for $n-1$ crossings. We claim (and prove it at the end of this proof) that there is a crossing $j$ of $D$ such that one of the two possible splittings of that crossing produces a diagram $D'$ with the same characteristics of $D$ (connected and contained or not contained in a 2-disk and representing a $\Z_2$-homologically trivial link) with $n-1$ crossings. We can suppose that $D'$ is obtained with a positive splitting on $j$ (the case where $D'$ is obtained splitting in the negative way is analogous). Hence $s_+D' = s_+D$, $ p(s_+,D') = p(s_+)$ ($p(s,D')$ is the quantity $p(s)$ related to the diagram $D'$). Up to mirror image we have the cases pictured in Fig.~\ref{figure:cases_lem}, in all cases $s_-D' = sD \pm 1$. The inductive hypothesis leads to the desired inequality.

Now we prove the claim. One of the two splittings of each crossing must be connected. If $D$ is contained in a 2-disk, each of its splittings is so and we are done. Suppose that $D$ is not contained in a 2-disk. If there are at least two crossings as the ones of Fig.~\ref{figure:reducedD2}, then we take the crossing $i$ as one of them. We get a non connected splitting on $i$ and a connect one that is not contained in a 2-disk. Suppose there is at most one crossing as the ones of Fig.~\ref{figure:reducedD2}. Since $n\geq 2$ we can take the crossing $i$ as one not like the ones Fig.~\ref{figure:reducedD2}, hence both the splittings with $i$ are not contained in a 2-disk. 
\end{proof}
\end{lem}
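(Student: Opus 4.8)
The plan is to argue by induction on the number $n$ of crossings, following the classical $S^3$ scheme (see \cite[Chapter 5]{Lickorish}) but keeping track only of the homotopically trivial circles counted by $s_+D$ and $s_-D$. For the base I would first dispose of $n=0$: a connected crossingless diagram is a single circle, which is either a trivial circle (contained in a $2$-disk, so $s_+D+s_-D=1+1=2=n+2$) or the core (not in a disk, so $s_+D+s_-D=0=n$). Since a non-disk diagram may have no crossing to spare after one reduction, I would also treat the two one-crossing non-disk diagrams (Fig.~\ref{figure:cases_lem}-(right) and its mirror) directly as base cases: one resolution produces a single trivial circle and the other two non-trivial ones, so $s_+D+s_-D=1=n$.

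For the inductive step, assume the statement for $n-1$ crossings. The heart of the argument is the claim that $D$ admits a crossing $j$ one of whose two resolutions produces a diagram $D'$ that is again connected, again $\Z_2$-homologically trivial, and of the same kind as $D$ (contained in a $2$-disk or not). Note that $\Z_2$-triviality is automatic: the states of $D'$ are exactly the states of $D$ that fix the resolution at $j$, and all states of $D$ have even $p(s)$ by Remark~\ref{rem:p(s)}. Granting the claim, say $D'$ arises from the positive resolution of $j$. Then the all-positive state of $D$ agrees with the all-positive state of $D'$, so $s_+D=s_+D'$, while the all-negative states of $D$ and of $D'$ differ only in the resolution of $j$; by the single-crossing analysis in the proof of Lemma~\ref{lem:ineq1}, where changing one crossing alters the trivial-circle count by exactly $\pm1$, one gets $s_-D=s_-D'\pm1$, hence $s_-D\le s_-D'+1$. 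The negative case is symmetric. Adding, $s_+D+s_-D\le(s_+D'+s_-D')+1$, and the inductive hypothesis applied to $D'$ yields $s_+D+s_-D\le n+2$ when $D$ lies in a disk and $s_+D+s_-D\le n$ otherwise.

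The main obstacle is the claim itself. I would start from the standard fact that, $D$ being connected as a $4$-valent graph, at least one of the two resolutions of any given crossing keeps the diagram connected. If $D$ is contained in a $2$-disk, every resolution stays inside that disk, so the connected resolution of any crossing already gives a $D'$ of the correct kind. If $D$ is not contained in a $2$-disk, the delicate point is to preserve the non-disk type, since resolving a crossing of the band type in Fig.~\ref{figure:reducedD2} can unwrap a non-disk diagram into a disk one; I would therefore split according to how many band crossings occur. If there are at least two band crossings, I take $j$ to be one of them: one of its resolutions is disconnected while the other is connected, and because a second band crossing survives, that connected resolution is still non-disk. If there is at most one band crossing, then since $n\ge2$ I may choose $j$ to be a crossing \emph{not} of the band type; for such a crossing neither resolution unwraps the diagram, so both resolutions remain non-disk, and I take the connected one. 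This case analysis establishes the claim and closes the induction; I expect its verification — in particular checking that a non-band smoothing never converts a non-disk diagram into a disk one — to be the genuinely technical point.
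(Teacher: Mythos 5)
Your proposal is correct and follows essentially the same route as the paper's own proof: the same induction on $n$, the same base cases (the crossingless circle and the one-crossing non-disk diagrams), the same key claim that some resolution of some crossing yields a connected diagram of the same disk/non-disk type representing a $\Z_2$-homologically trivial link, and the same three-case analysis of that claim according to the number of crossings of the type in Fig.~\ref{figure:reducedD2}. The only differences are cosmetic: you make explicit two points the paper leaves implicit, namely that $\Z_2$-triviality is automatically inherited by $D'$ (via Remark~\ref{rem:p(s)}) and that $s_-D \le s_-D' + 1$ follows from the single-crossing analysis in the proof of Lemma~\ref{lem:ineq1}.
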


\begin{lem}\label{lem:alter_eq}
Let $D\subset S^1\times [-1,1]$ be a $n$-crossing, connected and alternating link diagram of a $\Z_2$-homologically trivial link in $S^1\times S^2$. Then
$$
s_+D + s_-D  = \begin{cases}
n+2 & \text{if $D$ is contained in a 2-disk} \\
n & \text{otherwise}
\end{cases} .
$$
\begin{proof}
The number of edges of $D$, seen as a 4-valent graph, is $2n$. The diagram $D$ divides $S^1\times [-1,1]$ in 2-dimensional regions. A count of the Euler characteristic says that the sum of the characteristics of the regions is equal to $n$.

Suppose that $D$ is contained in a 2-disk. Since it is connected all the regions except the external one are disks. The external one is a punctured annulus. Since $D$ is alternating $s_+D + s_-D$ is equal to the number of regions. Hence
$$
s_+D + s_-D  = \text{no. regions }  = \sum_{R \text{ region}} \chi(R) + 2 = n +2 .
$$

Now suppose that the graph $D\subset S^1\times [-1,1]$ is not contained in a 2-disk. Since $D$ is connected all the regions except two are disks. The remaining two (the external ones) are annuli. Since $D$ is alternating $s_+D + p(s_+) + s_-D + p(s_-)$ is equal to the number of regions. Hence
$$
s_+D + p(s_+) + s_-D + p(s_-)  = \text{no. regions}  = \sum_{R \text{ region}} \chi(R) + 2 = n +2 .
$$
There are two regions that are not disks. Since the represented link is $\Z_2$-homogically trivial, in a black and white coloring the external regions must be colored in the same way. Hence we get their boundary either with $s_+$ or with $s_-$. Therefore either $p(s_+)=2$ and $p(s_-)=0$, or $p(s_+)=0$ and $p(s_-)=2$.
\end{proof}
\end{lem}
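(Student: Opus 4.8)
The plan is to combine an Euler-characteristic count of the regions that $D$ cuts out of the annulus with the chessboard-coloring description of the two extreme splittings, and then to pin down the homotopically non-trivial circles using the $\Z_2$-triviality hypothesis.

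First I would regard $D$ as a $4$-valent graph embedded in $A=S^1\times[-1,1]$, with $n$ vertices and $2n$ edges, and study the regions $R$ into which it divides $A$. From $\chi(A)=0$ and $\chi(D)=n-2n=-n$, additivity of Euler characteristic over the decomposition $A=D\sqcup\bigsqcup_R R$ gives $\sum_R\chi(R)=n$. Here connectedness of $D$ is essential: it forces every internal region to be a disk, so that the only non-disk regions are the external ones — a single punctured annulus ($\chi=-1$) when $D$ lies in a $2$-disk, and two annuli ($\chi=0$) otherwise. Bookkeeping these characteristics shows that in both cases the total number of regions equals $n+2$, and that every region has exactly one \emph{internal} boundary component (the one not lying on $\partial A$).

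Next I would bring in the alternating hypothesis through the black/white coloring used in Proposition~\ref{prop:reduced_D} and Figure~\ref{figure:black-white}: because $D$ is alternating, the $s_+$-splitting is precisely the family of internal boundary circles of the black regions, and the $s_-$-splitting that of the white regions. Since each region contributes exactly one internal boundary circle, this yields $(s_+D+p(s_+))+(s_-D+p(s_-))=\#\{\text{black}\}+\#\{\text{white}\}=n+2$. If $D$ is contained in a $2$-disk, every splitting circle lies in that disk and is homotopically trivial, so $p(s_+)=p(s_-)=0$ and we immediately obtain $s_+D+s_-D=n+2$.

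It remains to treat the non-disk case, which is where $\Z_2$-triviality enters and which I expect to be the delicate point. The homotopically non-trivial circles can only be the internal boundaries of the two external annular regions, so $p(s_+),p(s_-)\in\{0,2\}$, each external region feeding its single non-trivial circle into the splitting of its own colour. Because $L$ is $\Z_2$-homologically trivial — equivalently $p(s)$ is even for every state by Remark~\ref{rem:p(s)} — the two external regions must receive the \emph{same} colour, so both non-trivial circles land in one and the same splitting; up to exchanging $s_+\leftrightarrow s_-$ this gives $p(s_+)=2$ and $p(s_-)=0$, hence $p(s_+)+p(s_-)=2$. Subtracting from the identity above yields $s_+D+s_-D=(n+2)-2=n$, as claimed. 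The main obstacle will be making the region bookkeeping and the ``same colour'' step fully rigorous: one must verify that connectedness really forces all internal regions to be disks with exactly the stated external regions, and that $\Z_2$-triviality is equivalent to the two external regions sharing a colour, so that the two non-trivial circles are genuinely forced into a single splitting.
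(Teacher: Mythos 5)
Your proposal is correct and follows essentially the same route as the paper's proof: the Euler-characteristic count $\sum_R\chi(R)=n$, connectedness forcing all internal regions to be disks (with the external regions a pair of pants, resp.\ two annuli), the alternating/chessboard identification of the $s_\pm$ splittings with internal boundaries of black/white regions giving $(s_+D+p(s_+))+(s_-D+p(s_-))=n+2$, and $\Z_2$-triviality forcing the two external regions to share a colour so that $p(s_+)+p(s_-)=2$. Your write-up is in fact slightly more explicit than the paper's on the key bookkeeping point (each region has exactly one internal boundary component), but the argument is the same.
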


\begin{theo}\label{theorem:Tait}
Let $D\subset S^1\times [-1,1]$ be a $n$-crossing, connected diagram of a $\Z_2$-homologically trivial link in $S^1\times S^2$. Then
$$
B(\langle D \rangle) \leq \begin{cases}
4n +4 & \text{if $D$ is contained in a 2-disk} \\
4n & \text{otherwise}
\end{cases} .
$$
If $D$ is also alternating and simple, then
$$
B(\langle D \rangle) = \begin{cases}
4n + 4 & \text{if $D$ is contained in a 2-disk} \\
4n & \text{otherwise}
\end{cases} .
$$
\begin{proof}
The first statement follows from the first statement of Lemma~\ref{lem:ineq1} and from Lemma~\ref{lem:ineq2}. The second statement follows from Proposition~\ref{prop:reduced_D}, the second statement of Lemma~\ref{lem:ineq1} and Lemma~\ref{lem:alter_eq}.
\end{proof}
\end{theo}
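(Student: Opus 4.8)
The plan is to assemble the statement directly from the degree estimate of Lemma~\ref{lem:ineq1} and from the two counts of $s_+D+s_-D$ supplied by Lemma~\ref{lem:ineq2} and Lemma~\ref{lem:alter_eq}, invoking Proposition~\ref{prop:reduced_D} to provide the adequacy needed in the equality case. Since $D$ represents a $\Z_2$-homologically trivial link, Remark~\ref{rem:p(s)} guarantees that every $p(s)$ is even, so Lemma~\ref{lem:ineq1} applies and yields the key inequality $B(\langle D\rangle)\leq 2(n+s_+D+s_-D)$, which drives both statements.

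For the first (inequality) statement I would feed the bound of Lemma~\ref{lem:ineq2} into this estimate, treating the two geometric cases separately. When $D$ lies in a 2-disk, Lemma~\ref{lem:ineq2} gives $s_+D+s_-D\leq n+2$, whence $B(\langle D\rangle)\leq 2(n+(n+2))=4n+4$; otherwise $s_+D+s_-D\leq n$ and $B(\langle D\rangle)\leq 2(n+n)=4n$. No further argument is required here.

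For the second (equality) statement I would first invoke Proposition~\ref{prop:reduced_D}: a connected, alternating, simple diagram of a $\Z_2$-homologically trivial link is adequate. Adequacy upgrades the inequality of Lemma~\ref{lem:ineq1} to the equality $B(\langle D\rangle)=2(n+s_+D+s_-D)$. It then remains to replace $s_+D+s_-D$ by its exact value, which for connected alternating diagrams is computed in Lemma~\ref{lem:alter_eq}: it equals $n+2$ when $D$ is contained in a 2-disk and $n$ otherwise. Substituting gives $B(\langle D\rangle)=4n+4$ in the disk case and $4n$ in the remaining case, matching the two upper bounds exactly.

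The theorem is thus little more than bookkeeping, and the genuine content lives in the lemmas it cites; the only point demanding care is that the hypotheses line up across them. The main thing to verify is that the same dichotomy ``$D$ inside a 2-disk versus not'' is tracked consistently through Lemma~\ref{lem:ineq1}, Lemma~\ref{lem:ineq2}, Lemma~\ref{lem:alter_eq} and Proposition~\ref{prop:reduced_D}, and that in the equality case the simplicity hypothesis is precisely what excludes the non-adequate configurations (the crossings of Fig.~\ref{figure:reducedD} and Fig.~\ref{figure:reducedD2}) isolated in the proof of Proposition~\ref{prop:reduced_D}. I expect the only potential obstacle to be a sign or case confusion in matching the even parity of $p(s)$ with the two external-region colourings, but this has already been dealt with upstream in Lemma~\ref{lem:ineq1} and Lemma~\ref{lem:alter_eq}, so the present proof should reduce to the substitutions described above.
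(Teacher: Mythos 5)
Your proposal is correct and follows exactly the paper's own argument: the inequality comes from combining the first statement of Lemma~\ref{lem:ineq1} with Lemma~\ref{lem:ineq2}, and the equality comes from Proposition~\ref{prop:reduced_D} (adequacy), the second statement of Lemma~\ref{lem:ineq1}, and Lemma~\ref{lem:alter_eq}, with the same case-by-case substitution. Nothing is missing; your write-up simply makes explicit the bookkeeping the paper leaves implicit.
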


\begin{proof}[Proof of Theorem~\ref{theorem:Tait_conj}]
Every diagram of $L$ is connected and not contained in a 2-disk because $L$ is non H-split and not contained in a 3-ball. By the second statement of Theorem~\ref{theorem:Tait}, we have $B(\langle D \rangle) = 4n(D)$, where $n(D)$ is the number of crossings of $D$. Let $D'\subset S^1\times [-1,1]$ be another diagram of $L$ for some embedded annulus (maybe different from the one of $D$). By the first statement of Theorem~\ref{theorem:Tait}
$$
4n(D) = B(\langle D\rangle) =B(\langle D' \rangle) \leq 4n(D') .
$$
Therefore $n(D)\leq n(D')$ for every diagram $D'$ of $L$.
\end{proof}

\begin{proof}[Proof of Theorem~\ref{theorem:Tait_conj_Jones}]
In Theorem~\ref{theorem:Tait} we discussed the case of simple diagrams. Thus we suppose that $D$ has no crossings as the ones of Fig.~\ref{figure:reducedD} but has $k>0$ crossings as the ones of Fig.~\ref{figure:reducedD2}. By the second equation in Fig.~\ref{figure:sphere} with $a=b=1$, the Kauffman bracket of $D$ is equal to $1/(-A^2-A^{-2})$ times the bracket of a $n$-crossing diagram $D'$ lying on a 2-disk. Hence
$$
B(\langle D \rangle) = B(\langle D' \rangle) -4 .
$$
It is easy to check that $D'$ is alternating. There are exactly $k$ crossings in $D'$ as the ones of Fig.~\ref{figure:reducedDS3}. We can easily get a reduced alternating $(n-k)$-crossing diagram $D''\subset D^2$ representing the same link of $D'$. Hence by Theorem~\ref{theorem:Tait}
\beq
B(\langle D \rangle) & = & B(\langle D'' \rangle) -4 \\
& = & 4(n-k) +4 -4 \\
& = & 4n- 4k .
\eeq
\end{proof}

\begin{cor}\label{cor:conj_Tait_Jones}
Let $L\subset S^1\times S^2$ be a non H-split $\Z_2$-homologically trivial link. 
\begin{enumerate}
\item{If $B(\langle L \rangle)$ is not a positive multiple of $4$, then either $L$ is the knot in $S^1\times S^2$ with crossing number $1$, or $L$ is not alternating.}
\item{If $L$ is contained in a 3-ball and $B(\langle L \rangle)< 4n+4$, then either $L$ is not alternating, or $L$ has crossing number lower than $n$.}
\item{If $L$ is not contained in a 3-ball, $L$ does not intersect twice a non separating 2-sphere and $B(\langle L \rangle)< 4n$, then either $L$ is not alternating, or $L$ has crossing number lower than $n$.}
\end{enumerate}
\begin{proof}
It follows from Theorem~\ref{theorem:Tait_conj_Jones}.
\end{proof}
\end{cor}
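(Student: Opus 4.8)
The plan is to prove the three statements in contrapositive form, using that the breadth $B(\langle L\rangle)$ is an invariant of $L$ alone—independent of the chosen diagram, annulus and framing (the remark after Definition~\ref{def:breadth})—so that it suffices to produce one convenient diagram and read off $B$ from Theorem~\ref{theorem:Tait_conj_Jones}. In every case, if $L$ is alternating I would begin with an alternating diagram, remove the crossings of Fig.~\ref{figure:reducedD} by untwisting them, and reach a connected alternating diagram $D$ with none of those crossings; connectedness survives because $L$ is non H-split. Writing $n_1$ for the number of crossings of $D$ and $k$ for the number of crossings of the type of Fig.~\ref{figure:reducedD2}, Theorem~\ref{theorem:Tait_conj_Jones} gives $B(\langle L\rangle)=4n_1+4$ when $D$ lies in a disk and $B(\langle L\rangle)=4(n_1-k)$ otherwise. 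Note also that $n_1$, being the number of crossings of an actual diagram of $L$, is at least the crossing number $m$ of $L$.

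For~(1) I note that both values are non-negative multiples of $4$: the first equals $4(n_1+1)\geq 4$, and the second equals $4(n_1-k)\geq 0$ because $k\leq n_1$. Thus, for an alternating $L$, the breadth can fail to be a \emph{positive} multiple of $4$ only if it is $0$, which forces the non-disk case with $n_1=k$, i.e. every crossing of $D$ is of the type of Fig.~\ref{figure:reducedD2}. Collapsing these band crossings by the reduction of Subsection~\ref{subsec:non_red} produces a diagram of $L$ with at most one crossing; the zero-crossing outcome is impossible (a single core is $\Z_2$-homologically non trivial, and an unknot in a ball has a disk diagram giving $B=4\neq 0$), so $L$ is the crossing number $1$ knot. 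This is exactly the contrapositive of~(1).

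For~(2) and~(3) I would assume $L$ alternating with crossing number $m\geq n$ and deduce the required lower bound on $B$. In case~(2), since $L$ lies in a $3$-ball, Remark~\ref{rem:tensor} lets me compute $B(\langle L\rangle)$ as in $S^3$, where a reduced alternating diagram automatically lies in a disk; the disk case then gives $B=4n_1+4$ with $n_1\geq m\geq n$, so $B\geq 4n+4$. In case~(3), the hypothesis that $L$ meets no non-separating $2$-sphere twice forces $k=0$, because a surviving crossing of the type of Fig.~\ref{figure:reducedD2} would exhibit such a sphere; moreover $D$ is not contained in a disk, since $L$ is not contained in a ball. Hence $D$ is simple and not in a disk, Theorem~\ref{theorem:Tait_conj_Jones} gives $B=4n_1$, and $n_1\geq m\geq n$ yields $B\geq 4n$.

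The arithmetic and the invariance of the breadth are routine; the delicate part is the geometric dictionary relating diagrammatic and topological features. Concretely, I must justify that untwisting the crossings of Fig.~\ref{figure:reducedD} preserves the alternating condition, that a crossing of the type of Fig.~\ref{figure:reducedD2} in a diagram with no crossing of Fig.~\ref{figure:reducedD} genuinely records a non-separating $2$-sphere meeting $L$ in two points, and that a diagram contained in a disk represents a link contained in a ball (the converse direction, used in~(2), is handled by passing to $S^3$ via Remark~\ref{rem:tensor}). Once these correspondences are secured, all three statements drop out of Theorem~\ref{theorem:Tait_conj_Jones} as above.
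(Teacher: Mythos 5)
Your overall route---prove each part in contrapositive form and read the breadth off Theorem~\ref{theorem:Tait_conj_Jones}---is exactly the paper's (its proof is the one-line ``it follows from Theorem~\ref{theorem:Tait_conj_Jones}''), and your treatments of parts (1) and (3) are sound. In (1), both values produced by Theorem~\ref{theorem:Tait_conj_Jones} are non-negative multiples of $4$, breadth $0$ forces the non-disk case with $n_1=k$, and the reduction of Subsection~\ref{subsec:non_red} then leaves a diagram with at most one crossing, whose zero-crossing outcome you correctly exclude; in (3), a crossing as in Fig.~\ref{figure:reducedD2} does give rise to a non-separating sphere meeting $L$ in two points (double the meridian disk over the vertical arc through the crossing), so $k=0$, the diagram is simple and essential, and $B(\langle L\rangle)=4n_1\geq 4n$.

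Part (2), however, has a genuine gap. In this paper ``$L$ is alternating'' means that $L$ admits an alternating diagram \emph{in the annulus}; nothing in your argument shows that a link contained in a $3$-ball which is alternating in this sense admits an alternating diagram contained in a $2$-disk (equivalently, is alternating as a classical link in $S^3$). Your proposed justification---``passing to $S^3$ via Remark~\ref{rem:tensor}''---cannot supply this: that remark only identifies the \emph{value} $\langle L\rangle$ computed in $S^1\times S^2$ with the one computed in $S^3$; it neither converts an essential annulus diagram into a disk diagram nor produces the ``reduced alternating diagram'' in $S^3$ that your computation presupposes. The unhandled case is the one where every alternating diagram $D$ of $L$ (after removing the crossings of Fig.~\ref{figure:reducedD}) fails to be contained in a disk: there Theorem~\ref{theorem:Tait_conj_Jones} gives only $B(\langle L\rangle)=4n_1-4k$, and the best comparison with the crossing number $m$ that the paper's reduction of Subsection~\ref{subsec:non_red} provides is $m\leq n_1-k+1$, i.e. $B(\langle L\rangle)\geq 4m-4\geq 4n-4$, strictly short of the required $4n+4$. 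To close part (2) you must either prove that such an essential alternating diagram cannot represent a link contained in a ball (or, what amounts to the same, that annulus-alternating links in a ball are classically alternating), or else prove directly that in the essential case $n_1-k\geq m+1$; neither statement follows from Remark~\ref{rem:tensor} or from any result you invoke, and this topological-versus-diagrammatic mismatch is precisely the delicate territory of the paper's Question~\ref{quest:qred_no_red}.
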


\begin{ex}\label{ex:no_alt}
The knots represented by the diagrams in Fig.~\ref{figure:ex_0Kauf} and in Fig.~\ref{figure:ex_no_alt} are $\Z_2$-homologically trivial and have Kauffman bracket respectively equal to $0$ and $A-A^{-3}-A^{-5}$. Therefore by Corollary~\ref{cor:conj_Tait_Jones}-(1) they are not alternating.
\end{ex}

\begin{figure}
\begin{center}
\includegraphics[scale=0.55]{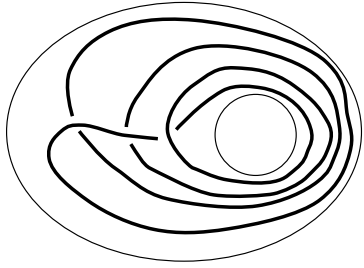} 
\end{center}
\caption{A $\Z_2$-homologically trivial knot in $S^1\times S^2$ whose Kauffman bracket is $A-A^{-3}-A^{-5}$.}
\label{figure:ex_no_alt}
\end{figure}

\section{Open questions}
In this section we expose some conjectures that would extend our result to the case of links in the connected sum of $g\geq 0$ copies of $S^1\times S^2$ and make them more complete. We remind that we already asked two questions before: Question~\ref{quest:reid_moves} and Question~\ref{quest:qred_no_red}.

\begin{defn}
A link diagram in the disk with $g\geq 0$ holes is \emph{simple} if there are no embedded disks in the punctured disk whose boundary intersect the diagram exactly in one crossing (as the diagrams of Fig.~\ref{figure:reducedD}), and there are no crossings adjacent to two external regions (as the ones of Fig.~\ref{figure:reducedD2}) (neither crossings adjacent twice to the same external region).
\end{defn}

\begin{quest}
Does the following version version of Theorem~\ref{theorem:Tait_conj} for links in $\#_g(S^1\times S^2)$ hold for any $g \geq 0$? ``Fix a decomposition of $\#_g(S^1\times S^2)$ as the double of a handlebody and fix an embedding of the disk with $g$ holes in one such handlebody. Let $D$ be a connected, alternating, simple diagram in the disk with $g\geq 0$ holes that represent a $\Z_2$-homologically trivial link in $\#_g(S^1\times S^2)$ whose diagrams in the punctured disk are all connected and not contained in a disk with $g'<g$ holes. Then $D$ has the minimal number of crossings''.
\end{quest}

\begin{quest}
Does the following extension of Theorem~\ref{theorem:Tait_conj_Jones} hold for every $g\geq 0$?: ``Let $D$ be a $n$-crossing, connected, alternating diagram in the disk with $g$ holes that can not be contained in a disk with $g'<g$ holes and has no crossings as the ones of Fig.~\ref{figure:reducedD} and represents a $\Z_2$-homologically trivial link in $\#_g(S^1\times S^2)$. Then
$$
B(\langle D \rangle) = 4n +4 -4g -4k ,
$$ 
where $k$ is the number of crossings adjacent to two external regions (Fig.~\ref{figure:reducedD2})''.
\end{quest}

\begin{quest}
We have considered all the interesting links in $S^1\times S^2$, but in order to make Theorem~\ref{theorem:Tait_conj} really complete we should remove the hypothesis of ``non H-split'' and ``not contained in a 3-ball''. We think that the result remains true even if we remove those hypothesis. In particular we think that a link represented by an alternating connected diagram in some annulus is non H-split (the result is true for links in $S^3$ \cite[Chapter 4]{Lickorish}). Are these conjectures true? Are the natural extension of these conjectures in $\#_g(S^1\times S^2)$ true for any $g\geq 0$?
\end{quest}

\end{document}